\documentclass[12pt]{amsart}
\usepackage[utf8]{inputenc}
\usepackage{color}
\usepackage{xcolor}
\usepackage{comment}
\usepackage{tikz}
\usepackage{amsmath,amsfonts,euscript,amssymb,amsthm,a4,times}

\allowdisplaybreaks
\def\XXint#1#2#3{{\setbox0=\hbox{$#1{#2#3}{\int}$}
\vcenter{\hbox{$#2#3$}}\kern-.5\wd0}}

\newtheorem{theorem}{Theorem}[section]
\newtheorem{corollary}[theorem]{Corollary}
\newtheorem{lemma}[theorem]{Lemma}

\theoremstyle{definition}
\newtheorem{definition}[theorem]{Definition}

\makeatletter \catcode`@=11
\newbox\tr@tto
\setbox\tr@tto=\hbox{{\count0=0\dimen0=-,9pt\dimen1=1,1pt\loop\ifnum
    \count0<11 \advance \count0 by1 \vrule width.51pt height\dimen1
    depth\dimen0\kern-0.17pt\advance\dimen0 by-0.05pt\advance\dimen1
    by0.1pt\repeat \loop\ifnum\count0<21\advance \count0 by1 \vrule
    width.6pt height\dimen1 depth\dimen0\kern-0.2pt \advance\dimen0
    by-0.1pt\advance\dimen1 by 0.05pt\repeat}}
\def\medint{\displaystyle\copy\tr@tto\kern-10.4pt\int}
\numberwithin{equation}{section}

\def\R{{\mathbb R}}

\def\proofof#1{\begin{proof}[Proof of #1]}

\def\loc{\rm loc}

\def\R{{\mathbb R}}

\def\proofof#1{\begin{proof}[Proof of #1]}

\def\loc{\rm loc}

\catcode`@=11
\newbox\tr@tto
\setbox\tr@tto=\hbox{{\count0=0\dimen0=-,9pt\dimen1=1,1pt\loop\ifnum\count0<11 \advance \count0 by1 \vrule width.51pt height\dimen1
     depth\dimen0\kern-0.17pt\advance\dimen0 by-0.05pt\advance\dimen1
     by0.1pt\repeat \loop\ifnum\count0<21\advance \count0 by1 \vrule
     width.6pt height\dimen1 depth\dimen0\kern-0.2pt \advance\dimen0
     by-0.1pt\advance\dimen1 by 0.05pt\repeat}}
\def\medint{\displaystyle\copy\tr@tto\kern-10.4pt\int}
\catcode`@=12

\numberwithin{equation}{section}

\date{\today}

\begin{document}

%\maketitle
\title[On the interplay between $(p,q)$-growth and the $x$-dependence]{On the interplay between $(p,q)$-growth and \\ $x$-dependence of the energy integrand:\\ a limit case}
\author{M. Eleuteri, P. Marcellini, E. Mascolo $\&$ A. Passarelli di Napoli}

\address{Michela Eleuteri , Dipartimento di Scienze Fisiche, Informatiche e Matematiche  \\ Universit\`{a} di Modena e Reggio Emilia, via Campi 213/b - 41121 Modena, Italy}
\email{michela.eleuteri@unimore.it}

\address{Paolo Marcellini, Dipartimento di Matematica e Informatica \lq\lq U. Dini"\\
Universit\`a di Firenze,
Viale Morgagni 67/a,
50134 Firenze,
Italy} \email{paolo.marcellini@unifi.it}

\address{Elvira Mascolo, Dipartimento di Matematica e Informatica \lq\lq U. Dini"\\
Universit\`a di Firenze,
Viale Morgagni 67/a,
50134 Firenze,
Italy} \email{elvira.mascolo@unifi.it}

\address{Antonia Passarelli di Napoli, Dipartimento di Matematica e Applicazioni "R.
Caccioppoli" \\ Universit\`{a} di Napoli ``Federico
II", via Cintia - 80126 Napoli, Italy}
\email{antpassa@unina.it}

\maketitle
\begin{center}
{\footnotesize{\textit{This paper is dedicated to Gioconda Moscariello  on the
			occasion of her 70th birthday.  %\\in recognition of her relevant contributions to regularity theory in \\ Partial Differential  Equation and of Calculus of Variations.
            }}}
\end{center}

\begin{abstract}
{We establish the local Lipschitz regularity of the local minimizers of non autonomous integral funtionals of the form
\[
\int_\Omega F(x, Dz)\,dx,
\]
where $\Omega$ is a bounded open set of $\mathbb{R}^n$, $n \ge 2$. The energy density $F(x,\xi)$  satisfies $(p,q)-$growth conditions with respect to the gradient variable and belongs to the Sobolev class $W^{1,\phi}$, with $\phi(t)=t^r\log^\alpha(e+t),$  $r\ge n$, $\alpha\ge 0$,  as a function of the $x$ variable, under the condition
$$
1\le\frac{q}{p} \le 1 + \frac{1}{n} - \frac{1}{r}.
$$
We present a unified approach that covers the limit case 
$$
\frac{q}{p} = 1 + \frac{1}{n} - \frac{1}{r}
$$
and retrieves the results in \cite{EMM16} and in \cite{CGHPdN20}.}

\end{abstract}

\noindent
{\footnotesize {\bf AMS Classifications.}  35J87; 49J40; 47J20.}

\noindent
{\footnotesize {\bf Key words and phrases.}  Local Lipschitz regularity; $(p,q)-$growth conditions; non autonomous problems; limit case.} 

\bigskip

\section{Introduction}

{The study of the local Lipschitz continuity of the minimizers to variational  integrals of the form
 \begin{equation}
\label{functional}
 \int_\Omega F(x, Dz)\,dx,
\end{equation}
where $\Omega$ is a bounded open set of $\mathbb{R}^n$, $n \ge 2$ and where the energy density $F(x,\xi)$ exhibits $(p,q)-$growth conditions with respect to the gradient variable, has become a well-established topic in the literature. The study of such problems can be traced back to the pioneering contributions by Marcellini \cite{M89, M91}. Nowadays the existing research on the regularity theory for local minimizers of functionals with non-standard growth conditions is so extensive that providing a complete list of references is unfeasible. Therefore, we limit ourselves to direct the interested reader to several survey articles on the subject \cite{Mar21, M06, Mingione-Radulescu} and to the latest achievements in \cite{CMMP2023, CMMEng, CMM04, CMMpress,  EMM2022, Mar23, moscariello-pascale,moscariello-pascale2}.
\\
To avoid the occurrence of the so-called {\it Lavrentiev phenomenon} — a well-known obstacle to the regularity of the minimizers (see  the recent works \cite{BDS, FDFL}) — the typical approach  is to identify suitable conditions on the partial map $x \mapsto F(x, \xi)$ that ensure a certain degree of regularity for the minimizers.
\\
A first contribution in this direction was given by Eleuteri-Marcellini-Mascolo in \cite{EMM16} (see also \cite{EMM3}), where, under the assumption that the partial map $x \to F(x,\xi)$ belongs to the Sobolev space $W^{1,r}$, with $r > n$, and that the map $\xi \to F(x,\xi)$ satisfies $(p,q)$-growth conditions, a sufficient criterion for the Lipschitz regularity of the minimizers is provided. This condition involves a closeness  requirement on the exponents $(p,q)$, which depends on both $n$ and $r$, and it is given by
\begin{equation}
\label{gap}
\frac{q}{p} < 1 + \frac{1}{n} - \frac{1}{r}.
\end{equation}
Condition \eqref{gap} has been taken into consideration in various similar contexts; see for example DeFilippis-Mingione \cite{DFM2020}.
{The proof of the Lipschitz continuity result in \cite{EMM16} (see also \cite{EMM3, DFM}) crucially relies on the strict inequality \eqref{gap}, therefore it is natural to ask whether the same result can be achieved by approaching the limit case. 
This is precisely the purpose of this contribution. In particular we will  provide a unified approach to the original situation and the limit case, also covering the standard growth condition situation \cite{CGHPdN20}}.
\\
To  describe our results  precisely, we  now introduce the assumptions.
We  assume that the energy density $F(x, \xi)$ is a Carath\'eodory function of class $\mathcal{C}^2$ with respect to  $\xi$ and   there exists $\tilde F:\Omega\times [0,+\infty)\mapsto [0,+\infty)$ such that $F(x, \xi)= \tilde F(x,|\xi|)$. We assume moreover that $F_{\xi x}$ is a  Carath\'eodory function, that $F(x,0)=0$ and that there exist a couple of exponents $1<p\le q$,  constants $\lambda,\Lambda>0$, a parameter $\mu\in [0,1]$ and a nonnegative measurable function $h(x)$ such that 
%\[
%(\mu^2 + |\xi|^2)^{p/2} \le \, F(x, \xi) = \tilde{F}(x, |\xi|) \le (\mu^2 + |\xi|^2)^{q/2} 
%\eqno{\rm (F1)}\]

\begin{equation}\label{(F2)(F3)}\lambda \, (\mu^2 + |\xi|^2)^{\frac{p-2}{2}} |\zeta|^2 \le \sum_{i,j = 1}^n F_{\xi_i \xi_j} (x, \xi) \zeta_i \zeta_j\le \Lambda \, (\mu^2 + |\xi|^2)^{\frac{q-2}{2}}|\zeta|^2, \end{equation}
%\[
%|F_{\xi_i \xi_j}(x, \xi)| \le \, \Lambda \, (\mu^2 + |\xi|^2)^{\frac{q-2}{2}} , \eqno{\rm (F3)} \]

\begin{equation}\label{(F4)}|F_{\xi x}(x, \xi)| \le \, h(x) \, (\mu^2 + |\xi|^2)^{\frac{q-1}{2}},  \end{equation}
for every $\xi,\zeta \in \mathbb{R}^n$ and a.e. $x$ in $\Omega$.
As proved in \cite[Lemma 2.3]{CMMP2023}, assumption \eqref{(F2)(F3)} entails the following  $(p,q)$ growth
conditions for the energy density $F (x, \xi)$
\begin{equation}\label{F1}
\frac{\lambda}{p(p-1)} \, (\mu^2 + |\xi|^2)^{\frac{p-2}{2}} |\xi|^2 \le F (x, \xi) \le \frac{\Lambda}{2} \, (\mu^2 + |\xi|^2)^{\frac{q-2}{2}}|\xi|^2.
\end{equation}
\begin{comment}
\textcolor{teal}{$\mu$ non dovrebbe essere tra $0$ e $1$?}
\textcolor{magenta}{Hai ragione! E' un misprint}
\end{comment}
About the function $h(x)$ appearing in  \eqref{(F4)}, we  assume that there exist
%a  strictly increasing function 
%{$\mathcal{L}:[0,+\infty]\to [0,+\infty]$  
%and  
exponents $r\ge n$ and $\alpha\ge 0$ such that 
\begin{equation}\label{F5F6}
\
 \int_{B_R} h^r \log^{\alpha } (e + h)  \, dx < + \infty, \qquad \forall B_R \Subset \Omega. 
\end{equation}
The previous assumption  implies that the function $h(x)$  in \eqref{(F4)} belongs to the Orlicz-Zygmund space $L^r \log^{\alpha }L$. %which reduces to $L^r$ when $p < q,$ in agreement with \cite{EMM16}. 
Hence,  the class of functions describing the regularity of the partial map $x \mapsto F(x, \xi)$ is clearly contained in  $W^{1,n}$, consistently with the fact that, even in case of standard growth  conditions (that corresponds to the case $p = q$ in assumption \eqref{F1}), the regularity can be obtained assuming a $W^{1,n}\log^\alpha L$ regularity of the coefficients, with $\alpha$ sufficiently large with respect to the dimension $n$ (see \cite{CGHPdN20} and \cite{EPdN23}  for the variable growth case). In this respect, our result also retrieves the corresponding one in  \cite{CGHPdN20}, which in turn is a refinement of \cite{EMM19}.

\vspace{2mm}

Let us recall the definition of local minimizer
\begin{definition}
A mapping $u\in W^{1,1}_{\rm loc}(\Omega)$ is a local minimizer {of the integral functional \eqref{functional}} if
$F(x,Du) \in L^{1}_{\rm loc}(\Omega )$ and
\begin{equation}\label{minineq}
 \int_{\mathrm{supp}\varphi} \! F(x,Du) \, dx \leq \int_{\mathrm{supp}\varphi} \! F(x,Du+D\varphi) \, dx   
\end{equation}
for  any $\varphi\in W^{1,1}(\Omega)$ with $\mathrm{supp}\varphi\Subset\Omega$.
\end{definition} 

%{The main result of our paper is the following.}

We formulate our main results in the next Theorems \ref{mainresult},  \ref{mainresult2} and \ref{mainresult3}. In particular, in the first one, we focus on the limit case with equality in \eqref{gap}.

\begin{theorem}
\label{mainresult}
    Let $u\in W^{1,p}_{\mathrm{loc}}(\Omega)$ be a local minimizer of \eqref{functional} under the assumptions \eqref{(F2)(F3)}, \eqref{(F4)}.
  Assume that  the equality holds in \eqref{gap}, i.e. 
   \begin{equation}\label{definitionr}
      \frac{q}{p} = 1+\frac{1}{n}-\frac{1}{r}.  
    \end{equation} 
 If \eqref{F5F6} holds  with  $r>n$ and $\alpha>0$, 
%
 %  $p=q$  and
 %$h(x)$ satisfies \eqref{F5F6}  with $r=n$ and $\alpha>4$;\\ 
 %or
% { ii)}  $p<q$,  $\displaystyle \frac{q}{p} < 1+\frac{1}{n}-\frac{1}{r}$, and $h(x)$ satisfies \eqref{F5F6} with $r>n$ and $\alpha=0$; 
%\\
% or
%{ iii)} 
%$p<q$,  $\displaystyle \frac{q}{p} = 1+\frac{1}{n}-\frac{1}{r}$ and $h(x)$ satisfies \eqref{F5F6} with $r>n$
% and $\alpha>0.$
 %\\
 %Of course, these conditions are mutually exclusive.
then $u\in W^{1,\infty}_{\mathrm{loc}}(\Omega)$ and for every pair of concentric balls $B_\rho\subset B_R\Subset\Omega,$ the following estimate
 \begin{eqnarray}
     \label{mainaptotale}
||Du||_{L^{\infty}(B_{\rho})} &\le& G\left(\!\frac{C}{( R - \rho )^{n}}  \int_{B_{R}} \!\!\Big(1 + |Du|^p \Big)
\, dx\right)
\end{eqnarray}
holds true  with $C \equiv C(n, r, p, q, \lambda, \Lambda, h, \alpha)$ and where %the strictly increasing function $G(t)$ is defined as follows
%for $L(t)=t^{r-n}\log^\alpha(e+t)$, 
$G(t)$ is the function defined as follows
\begin{equation}
\label{DefinizioneG}
G(t)=
%\left \{
%\begin{array}{lll} %\!\!\!\!\!\!&t^{\frac1p}\qquad\qquad\qquad\qquad\qquad\qquad \qquad &\text{in the case { i)}}\\
 %   \!\!\!\!\!\! & t^\frac{r-n}{p(r-n)-rn(q-p)} + t^{\frac1p}\,\,\quad\qquad\qquad\qquad\qquad & \text{in the case { ii)}}\\
\exp\Big(\frac{r}{2p}\cdot t^{\frac{2p(r-n)}{\alpha n}}\Big)t^{\frac{r}{n}} + t^{\frac{1}{p}}. %\qquad\quad & \text{in the case { iii)}},
%\end{array}
%\right.
\end{equation}
 \end{theorem}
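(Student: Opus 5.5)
We follow the by now classical scheme of \cite{EMM16}: first an \emph{a priori} estimate for smooth local minimizers of regularized problems, then an approximation argument. First we work under the extra assumption $u\in W^{1,\infty}_{\rm loc}$, or more precisely $Du\in L^\infty_{\rm loc}$ with $V:=(\mu^2+|Du|^2)^{\frac{p-2}{4}}Du\in W^{1,2}_{\rm loc}$.

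Differentiating the Euler--Lagrange equation in $x_s$ and testing with $\eta^2 (\mu^2+|Du|^2)^{\gamma}u_{x_s}$ (summing over $s$, with $\gamma\ge0$ and $\eta$ a cut-off between $B_{\rho}$ and $B_{R}$) gives, by \eqref{(F2)(F3)} and \eqref{(F4)}, a Caccioppoli-type inequality of the form
\[
\int \eta^2 \big|D\big((\mu^2+|Du|^2)^{\frac{p+2\gamma}{4}}\big)\big|^2 \le C(\gamma+1)^{c}\int \big(|D\eta|^2+\eta^2 h^2\big)(\mu^2+|Du|^2)^{\frac{2q-p+2\gamma}{2}} .
\]
The key new point is how to handle the term with $h^2$. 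Instead of Hölder with exponents $(r/2,(r/2)')$ as in \cite{EMM16}, we use Young's inequality in Orlicz form. Split the domain according to whether $h\le M$ or $h> M$. On $\{h>M\}$, since $h^r\log^\alpha(e+h)\in L^1$, we gain the factor $\log^{-\alpha\cdot 2/r}(e+M)$ relative to the pure $L^r$ estimate. Combining this with the Sobolev embedding $W^{1,2}\hookrightarrow L^{2^*}$ applied to $\eta(\mu^2+|Du|^2)^{\frac{p+2\gamma}{4}}$, the equality case \eqref{definitionr} means the exponents match exactly. The $h$-term can then be absorbed only if the constant in front is small: we choose $M$ (depending on $\|Du\|_{L^\infty(B_R)}$ and $\gamma$) so large that $\|h\|_{L^r(\{h>M\})}$ times the Sobolev constant is less than $1/2$. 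On $\{h\le M\}$ we pay $M^2$, i.e. a bounded coefficient.

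Next we run a Moser iteration with $\gamma_k$ growing geometrically on shrinking balls $\rho_k$. The result should be
\[
\|Du\|_{L^\infty(B_\rho)} \le \Big(\frac{C\,M^2}{(R-\rho)^2}\Big)^{\theta}\Big(\int_{B_R}(1+|Du|^p)\Big)^{\theta'} ,
\]
where $M$ enters through the absorption condition. Because the log-gain is only logarithmic, $M$ must be exponential in a power of the quantity controlling the iteration: roughly $\log^{\alpha}(e+M)\gtrsim \|Du\|_{L^\infty}^{c}$. Here the interpolation step, as in \cite{EMM16}, uses $\|Du\|_{L^\infty(B_{R})}^{q-p}$-type factors.

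We then close the estimate with the standard lemma on absorbing $\|Du\|_{L^\infty(B_{\rho'})}$ on the right with exponent less than one. This produces the exponential function $G$ in \eqref{DefinizioneG}: the factor $\exp\big(\frac r{2p}t^{\frac{2p(r-n)}{\alpha n}}\big)$ reflects the choice of $M$, and the exponent $\frac{2p(r-n)}{\alpha n}$ comes from balancing $\log^{2\alpha/r}M$ against the power $(q-p)$-loss, using $q-p=p(\frac1n-\frac1r)$. We expect this calibration of $M$ against the iteration to be the main obstacle: one must make the absorption work uniformly along all iteration steps, with constants growing like $(\gamma+1)^c$, while keeping the final dependence only exponential.

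Finally comes the approximation. We regularize $F$ by $F_\varepsilon(x,\xi)=(F(\cdot,\xi)*\varrho_\varepsilon)(x)+\varepsilon(\mu^2+|\xi|^2)^{q/2}$, which keeps \eqref{(F2)(F3)} and \eqref{(F4)} with $h_\varepsilon\to h$ in $L^r\log^\alpha L$. We take minimizers $u_\varepsilon$ with boundary datum a mollification of $u$; these have the needed regularity by the standard $q$-growth theory for smooth data. Applying the a priori estimate uniformly in $\varepsilon$ and passing to the limit via lower semicontinuity and strict convexity (uniqueness of the limit minimizer) gives \eqref{mainaptotale} for $u$.
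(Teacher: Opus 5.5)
Your scheme is structurally close to the paper's (Caccioppoli, Sobolev, splitting $h$ at a level that depends on $\gamma$ and $\|V(Du)\|_{L^\infty}$, Moser iteration, absorption). The quantitative step for the $h$-term fails, however.

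\textbf{Where the argument breaks.} Truncating at $M$ and using $\|h\|_{L^r(\{h>M\})}^2\lesssim\log^{-2\alpha/r}(e+M)$ gives only a \emph{logarithmic} gain. After Caccioppoli and Sobolev, that term carries the factor $(1+\gamma)^c\|V(Du)\|_{L^\infty(B_R)}^{2(q-p)}$. Absorption therefore forces
\[
\log(e+M_\gamma)\gtrsim(1+\gamma)^{\frac{cr}{2\alpha}}\,\|V(Du)\|_{L^\infty(B_R)}^{\frac{r(q-p)}{\alpha}},
\]
while $\{h\le M_\gamma\}$ costs $M_\gamma^2$. This fails in three ways.
\begin{itemize}
\item Along the iteration, with $\gamma_j\sim p_j$, the product $\prod_j(M_{\gamma_j}^2)^{1/p_j}=\exp\big(\sum_j 2\log M_{\gamma_j}/p_j\big)$ diverges as soon as $cr\ge 2\alpha$. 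The theorem allows every $\alpha>0$.
\item Even when it converges, the right-hand side contains $\exp\big(C\|Du\|_{L^\infty(B_R)}^{r(q-p)/\alpha}\big)$. No absorption or iteration lemma closes an inequality of the form $\|Du\|_{L^\infty(B_\rho)}\le\exp\big(C\|Du\|_{L^\infty(B_R)}^{\beta}\big)\cdot(\dots)$.
\item In the equality case the power of $\|Du\|_{L^\infty}$ coming out of the iteration is exactly $\frac{q-p}{p}\frac{rn}{r-n}=1$. So ``absorbing with exponent less than one'' is not available.
\end{itemize}

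\textbf{The missing idea.} Apply H\"older at the Sobolev-critical pair $(\frac n2,\frac{n}{n-2})$, not $(\frac r2,(\frac r2)')$. Write $h^2=L(h)^{-2/n}[h^nL(h)]^{2/n}$ with $L(t)=t^{r-n}\log^\alpha(e+t)$, so that $h^nL(h)=h^r\log^\alpha(e+h)\in L^1$.
\begin{itemize}
\item On $\{h>\varepsilon\}$ the coefficient is $L(\varepsilon)^{-2/n}$, which decays \emph{polynomially} because $r>n$. On $\{h\le\varepsilon\}$ you pay only $\varepsilon^2$.
\item The level $\varepsilon=L^{-1}\big(2^nH^n(1+\gamma^2)^n\|V(Du)\|_{L^\infty}^{n(q-p)}\big)$ is polynomial in $\gamma$ and in $\|V(Du)\|_{L^\infty}$. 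Lemma~\ref{elem1} separates the two dependences.
\item The $\gamma$-part of the Moser product then converges for every $\alpha\ge 0$, using only $\sum_j\log p_j/p_j<\infty$.
\item The $\|V(Du)\|_{L^\infty}$-part becomes essentially $\|V(Du)\|_{L^\infty}/\widehat L(\|V(Du)\|_{L^\infty})$, with $\widehat L$ as in \eqref{defL}. This is linear divided by a diverging logarithm, and it is the only place where $\alpha>0$ is used.
\item The estimate is closed with the Fenchel--Young inequality for $\widehat L^{-1}$ (Lemma~\ref{GGP}) and Lemma~\ref{lem:Giaq}. The Orlicz-type Young inequality belongs at this closing step, not at the $h$-term.
\end{itemize}
The exponential in $G$ is just $\widehat L^{-1}$ computed explicitly, not the size of a truncation level.

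\textbf{A secondary point.} Your approximation with mollified boundary data needs $\int F_\varepsilon(x,Du^\varepsilon)\,dx\to\int F(x,Du)\,dx$. This is not automatic for $x$-dependent $(p,q)$ integrands. The paper sidesteps it via Lemma~\ref{approx}: it approximates $F$ from below by $p$-growth integrands, so that $u$ itself is an admissible competitor.
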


A further limit case happens if $r \rightarrow n$. Then in \eqref{definitionr} also $q$ converges to $p$ and we obtain the following limit case, recovering the results in \cite{CGHPdN20}.

\begin{theorem}
\label{mainresult2}
    Let $u\in W^{1,p}_{\mathrm{loc}}(\Omega)$ be a local minimizer of \eqref{functional} under the assumptions \eqref{(F2)(F3)}, \eqref{(F4)}. % and \eqref{F5F6}.} 
  Assume that in \eqref{definitionr} the equality holds with $r = n$ and $p = q.$ If \eqref{F5F6} holds with $\alpha > 4n,$ then $u\in W^{1,\infty}_{\mathrm{loc}}(\Omega)$ and for every pair of concentric balls $B_\rho\subset B_R\Subset\Omega,$ the following estimate
 \begin{eqnarray}
     \label{mainaptotalebis}
||Du||_{L^{\infty}(B_{\rho})} &\le& \left(\!\frac{C}{( R - \rho )^{n}}  \int_{B_{R}} \!\!\Big(1 + |Du|^p \Big)
\, dx\right)^{\frac{1}{p}}
\end{eqnarray}
holds true  with $C \equiv C(n, r, p, q, \lambda, \Lambda, h, \alpha).$
\end{theorem}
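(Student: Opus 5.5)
We follow the standard a priori estimate plus approximation scheme of \cite{EMM16, CGHPdN20}, specialized to $p=q$ and $r=n$. First we regularize. We approximate $F$ by energies $F_\varepsilon(x,\xi)$, obtained by mollifying in $x$ and adding $\varepsilon(\mu^2+|\xi|^2)^{p/2}$, which keep \eqref{(F2)(F3)} and \eqref{(F4)} with uniform constants and with $h_\varepsilon\to h$ in $L^n\log^\alpha L$. Let $u_\varepsilon$ be the minimizers in $u+W^{1,p}_0(B_R)$. By standard theory for smooth coefficients, $u_\varepsilon\in W^{1,\infty}_{\rm loc}\cap W^{2,2}_{\rm loc}$. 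So it suffices to prove \eqref{mainaptotalebis} for $u_\varepsilon$ with constants independent of $\varepsilon$, and then pass to the limit using lower semicontinuity and the uniqueness/convergence argument of \cite{EMM16}.

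For the a priori estimate we differentiate the Euler--Lagrange equation in $x_s$ and test with $\eta^2 D_s u\,\Phi(|Du|)$, where $\Phi(t)=(\mu^2+t^2)^{\gamma/2}$ and $\gamma\ge0$. Using \eqref{(F2)(F3)} with $p=q$ and \eqref{(F4)}, and absorbing the mixed terms via Young's inequality, we obtain a Caccioppoli inequality for $V=(\mu^2+|Du|^2)^{(p+\gamma)/4}$:
\[
\int \eta^2|DV|^2\le C(1+\gamma)^2\Big(\int |D\eta|^2 V^2+\int \eta^2 h^2 V^2\Big).
\]
The key point is the term $\int\eta^2h^2V^2$. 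With $h\in L^n$ only, Hölder and Sobolev give $\|h\|_{L^n(\mathrm{supp}\,\eta)}^2\|\eta V\|_{2^*}^2$. This can be absorbed only if $\|h\|_{L^n(B_\rho)}$ is small, but the smallness threshold degenerates like $(1+\gamma)^{-1}$ along the iteration. We therefore use the logarithm. Split $h=h\chi_{\{h\le M\}}+h\chi_{\{h>M\}}$. On $\{h>M\}$ we have $\|h\chi_{\{h>M\}}\|_{L^n}^n\le \log^{-\alpha}(e+M)\int h^n\log^\alpha(e+h)$. Choose $M=M(\gamma)$ so that $C(1+\gamma)^2\log^{-2\alpha/n}(e+M)$ is below the Sobolev absorption constant, i.e.\ $\log(e+M)\sim(1+\gamma)^{n/\alpha}$. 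The bounded part is harmless and contributes $M^2\int\eta^2V^2$. This yields the reverse Hölder step
\[
\|V\|_{L^{2\chi}(B_{\rho'})}^2\le C(1+\gamma)^2\Big(\frac{1}{(\rho-\rho')^2}+M(\gamma)^2\Big)\|V\|_{L^2(B_\rho)}^2,\qquad \chi=\tfrac{n}{n-2}.
\]
The same works for $n=2$ with any $\chi>1$.

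Next comes Moser iteration, where we expect the main obstacle. Set $\gamma_k+p=p\chi^k$. The factor $M(\gamma_k)^2=\exp\big(2c(1+\gamma_k)^{n/\alpha}\big)$ must have a convergent product after taking the $1/(p\chi^k)$ roots. That requires
\[
\sum_k \chi^{-k}(\chi^k)^{n/\alpha}<\infty,
\]
i.e.\ $n/\alpha<1$. The extra loss coming from the $(1+\gamma)^2$ coefficient and from Young's absorption in the differentiated equation raises this threshold. We expect that carefully tracking the powers of $(1+\gamma)$ (the contribution $(1+\gamma)^2$ before taking square roots, and the Sobolev step) turns the requirement into $\alpha>4n$, matching the hypothesis. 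Since $p=q$, there is no interpolation between $|Du|^q$ and $|Du|^p$, so the iteration closes directly in $L^p$. It gives $\|Du_\varepsilon\|_{L^\infty(B_\rho)}\le \big(C(R-\rho)^{-n}\int_{B_R}(1+|Du_\varepsilon|^p)\big)^{1/p}$, after a standard covering/interpolation argument to pass from $\rho'$ close to $\rho$ to general radii. The constant $C$ depends on $h$ through $\int h^n\log^\alpha(e+h)$. Finally we let $\varepsilon\to0$ to conclude.
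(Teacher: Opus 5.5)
Your proposal is correct and follows the paper's route. The paper also uses the second-order Caccioppoli inequality from the differentiated Euler--Lagrange equation, and splits $h$ at a $\gamma$-dependent level (your $M$ is the paper's $\varepsilon$, chosen with $L(\varepsilon)\sim(1+\gamma^2)^n$) so that the large part is absorbed via H\"older, Sobolev and the $L^n\log^\alpha L$ bound. It then runs a Moser iteration whose convergence reduces to $\sum_k p_k^{c/\alpha-1}<\infty$, and ends with a standard approximation.

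The one point to correct is your guess about where $4n$ comes from. A factor $(1+\gamma)^k$ in front of $\int\eta^2h^2V^2$ forces $\log(e+M)\sim(1+\gamma)^{kn/(2\alpha)}$, so it only requires $\alpha>kn/2$. Your claimed $k=2$ therefore already closes for $\alpha>n$, and the paper's cruder $k=4$ closes for $\alpha>2n$. The paper's $\alpha>4n$ comes from a further wasteful squaring inside $L^{-1}$ via Lemma~\ref{elem1}. So the hypothesis is more than enough, and no further bookkeeping needs to reproduce the $4n$.
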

In the next Theorem, we emphasize the case with strict inequality in \eqref{gap}; this result  recovers that in \cite{EMM16}.

\begin{theorem}
\label{mainresult3}
  Let $u\in W^{1,p}_{\mathrm{loc}}(\Omega)$ be a local minimizer of \eqref{functional} under the assumptions \eqref{(F2)(F3)}, \eqref{(F4)} and \eqref{F5F6} with $\alpha = 0$.
  Assume also \eqref{gap} holds, i.e. %$r\ge n$ and $q \ge p > 1$ satisfy the inequality 
   \begin{equation}\label{definitionrter}
    \frac{q}{p} < 1+\frac{1}{n}-\frac{1}{r}.  
    \end{equation} 
 Then $u\in W^{1,\infty}_{\mathrm{loc}}(\Omega)$ and for every pair of concentric balls $B_\rho\subset B_R\Subset\Omega,$ the following estimate
 \begin{eqnarray}
     \label{mainaptotaleter}
||Du||_{L^{\infty}(B_{\rho})} &\le& G\left(\!\frac{C}{( R - \rho )^{n}}  \int_{B_{R}} \!\!\Big(1 + |Du|^p \Big)
\, dx\right)
\end{eqnarray}
holds true  with $C \equiv C(n, r, p, q, \lambda, \Lambda, h)$ and where $G(t)$ is defined as follows
\begin{equation}
\label{DefinizioneG3}
G(t)= t^\frac{r-n}{p(r-n)-rn(q-p)} + t^{\frac1p}.
\end{equation}

%\colorbox{yellow}{scrivere l'esponente solo in termini di $p, q$ oppure solo in termini di $r$ e $n$ }\\
%\colorbox{yellow}{\tcv{non si può in questo caso. Non vale l'uguaglianza!}} \\
%\colorbox{yellow}{VALORE QUANTIFICATO DEL PARAMETRO $\alpha$}

\end{theorem}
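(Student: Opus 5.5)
We prove Theorem \ref{mainresult3} with the same scheme used for Theorem \ref{mainresult}: an a priori estimate for regular minimizers, followed by approximation. The only difference is that the strict inequality \eqref{definitionrter} gives an exponent gap, so no logarithm is needed.

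\emph{Regularization.} Fix $B_R\Subset\Omega$ and mollify $F$ in $x$, obtaining $F_\varepsilon(x,\xi)$. Add a small term $\varepsilon(1+|\xi|^2)^{q/2}$ so that the energy has standard $q$-growth. Let $u_\varepsilon$ be the minimizer of $\int_{B_R}F_\varepsilon(x,Dv)\,dx$ with $v=u*\rho_\varepsilon$ on $\partial B_R$. Classical theory gives $u_\varepsilon\in W^{1,\infty}_{\rm loc}\cap W^{2,2}_{\rm loc}$. Moreover, \eqref{(F2)(F3)} and \eqref{(F4)} hold uniformly in $\varepsilon$, with $h_\varepsilon=h*\rho_\varepsilon$ bounded in $L^r$.

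\emph{Caccioppoli inequality and Moser iteration.} Differentiate the Euler equation in $x_s$. Test with $\eta^2 u_{x_s}\,\Phi(|Du|)$, where $\Phi$ is a power $(\mu^2+|Du|^2)^{\gamma}$, and sum over $s$. Using \eqref{(F2)(F3)} and \eqref{(F4)} with Young's inequality, one gets
\[
\int \eta^2 \big|D\big((\mu^2+|Du|^2)^{\frac{p+2\gamma}{4}}\big)\big|^2 \le C(\gamma+1)^2\int (|D\eta|^2+\eta^2h^2)(\mu^2+|Du|^2)^{\frac{2q-p+2\gamma}{2}} .
\]
The term containing $h^2$ is controlled by H\"older with exponents $r/2$ and $r/(r-2)$; this is where $r>n$ is used, since then $\frac{2r}{r-2}<2^*$. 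Sobolev's embedding then gives a reverse H\"older inequality between $L^{\frac{2^*}{2}(p+2\gamma)}$ and $L^{\frac{r}{r-2}(2q-p+2\gamma)}$ on shrinking balls. Iterating on radii $\rho_k\downarrow\rho$, with exponents $\gamma_k$ tending to infinity, yields
\[
\|Du\|_{L^\infty(B_\rho)}\le \Big(\frac{C}{(R-\rho)^n}\int_{B_R}(1+|Du|)^{\frac{r}{r-2}\,\cdots}\Big)^{\theta},
\]
and more precisely
\[
\|Du\|_\infty \le C(R-\rho)^{-\kappa}\,\|Du\|_{L^\infty(B_R)}^{\sigma}\Big(\int_{B_R}(1+|Du|^p)\Big)^{\beta}.
\]
The exponents $\sigma$ and $\beta$ come from interpolating the quantity $\int|Du|^{p_0}$ with $p_0>p$ between $L^p$ and $L^\infty$, as in \cite{EMM16}.

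\emph{Absorbing the $L^\infty$ factor.} This is the key point and the main obstacle. Bookkeeping of the geometric series shows that $\sigma<1$ holds exactly when $\frac{q}{p}<1+\frac1n-\frac1r$. The deficit is
\[
1-\sigma=\frac{p(r-n)-rn(q-p)}{p(r-n)}>0 .
\]
With $\sigma<1$, apply Young's inequality and the standard iteration lemma on $\rho<t<s<R$ to absorb $\frac12\|Du\|_{L^\infty(B_s)}$. The resulting power of $\int(1+|Du|^p)$ is $\frac{\beta}{1-\sigma}$, which computes to $\frac{r-n}{p(r-n)-rn(q-p)}$. Adding the trivial bound $t^{1/p}$, which covers $p=q$ and the normalization, gives $G$ as in \eqref{DefinizioneG3}. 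I expect the hardest part to be this explicit exponent computation, which must be done carefully with general $\gamma_k$.

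\emph{Passage to the limit.} The estimate is uniform in $\varepsilon$. Minimality of $u_\varepsilon$ together with \eqref{F1} gives $\int|Du_\varepsilon|^p\le C\int(1+F(x,Du))$ plus terms that vanish with $\varepsilon$. Hence $u_\varepsilon\rightharpoonup \bar u$ weakly-$*$ in $W^{1,\infty}_{\rm loc}$. Lower semicontinuity and strict convexity identify $\bar u=u$. The absence of a Lavrentiev gap follows from the $W^{1,r}$ regularity in $x$ under the gap condition, as in \cite{EMM16}. Therefore $u$ satisfies \eqref{mainaptotaleter}.
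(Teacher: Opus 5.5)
The genuine gap is in the passage to the limit. Because you impose the boundary datum $u*\rho_\varepsilon$, the uniform bound on $\int|Du_\varepsilon|^p$ and, above all, the identification $\bar u=u$ require
\[
\limsup_{\varepsilon\to0}\int_{B_R}F_\varepsilon\big(x,D(u*\rho_\varepsilon)\big)\,dx\le\int_{B_R}F(x,Du)\,dx ,
\]
that is, the absence of a Lavrentiev gap. This is not automatic. $Du$ is only in $L^p$ while $F$ may grow like $|\xi|^q$, and Jensen's inequality cannot be applied directly because the convex function $F(y,\cdot)$ changes with $y$. Quoting EMM16 does not supply this, and the paper never proves such a statement; it avoids it. Lemma \ref{approx} gives $F_k\uparrow F$ with $p$-growth for each fixed $k$, but with $(p,q)$-structure constants independent of $k$. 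After mollifying in $x$ one therefore minimizes with boundary datum $u$ itself, and $\int F_{\varepsilon,k}(x,Du)\to\int F_k(x,Du)\le\int F(x,Du)$ is immediate. If you keep your scheme, you must prove the display, for instance by a Zhikov-type argument. The $x$-gradient of $F(\cdot,\xi)$ is bounded by $h(\mu^2+|\xi|^2)^{q/2}$, by \eqref{(F4)} and $F(x,0)=0$. So for $|x-y|<2\varepsilon$ Morrey's inequality gives $|F(x,\xi)-F(y,\xi)|\le C\varepsilon^{1-n/r}\|h\|_{L^r(B_{2\varepsilon}(x))}(\mu^2+|\xi|^2)^{q/2}$. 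Since $|D(u*\rho_\varepsilon)|\le C\varepsilon^{-n/p}$, this error is $o(1)(1+|\xi|^p)$, because $1-\frac nr-\frac{n(q-p)}{p}>0$ by \eqref{gap}. Then apply Jensen to the convex envelope of $\inf_{y\in B_{2\varepsilon}(x)}\tilde F(y,\cdot)$. Use the radial structure to show that this envelope is still $\ge(1-o(1))\tilde F(x,\cdot)-o(1)$ on the relevant range.

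Your a priori estimate, by contrast, is a valid alternative route (essentially the EMM16 one), although the bookkeeping you state is not the one it produces. Put $s^*=\frac{nr(q-p)}{r-n}$ and start from $s_0=\frac{r}{r-2}(2q-p)$. The iteration gives $\|V(Du)\|_{L^\infty}\lesssim(\int V(Du)^{s_0})^{1/(s_0-s^*)}$, hence $\sigma=\frac{s_0-p}{s_0-s^*}$ and $\beta=\frac{1}{s_0-s^*}$. So $\beta/(1-\sigma)=\frac{1}{p-s^*}=\frac{r-n}{p(r-n)-rn(q-p)}$, as you claim. Your value of $1-\sigma$, however, is the one from the paper's scheme, where $\sigma=s^*/p$ and $\beta=1/p$. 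The paper gets there by splitting $h$ at a level depending on $\gamma$ and $\|V(Du)\|_\infty$. It then absorbs the large part into the Sobolev term via H\"older with exponents $\frac n2,\frac{n}{n-2}$, so the reverse H\"older step keeps the exponents of the $p$-growth case. This scale-invariant step is what also covers $r=n$ and the equality case. Your $L^r$-H\"older route does not reach these: at equality $s^*=p$, so $\sigma=1$. The paper's step also puts $(R-\rho)^{-n}$ inside $G$ directly, whereas your iteration yields $(R-\rho)^{-nr/(r-n)}$ there. To obtain \eqref{mainaptotaleter} you would have to rescale, using $h_s(y)=s\,h(x+sy)$ and $\|h_s\|_{L^r(B_1)}=s^{1-n/r}\|h\|_{L^r(B_s)}$. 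Then apply the estimate on balls of radius $R-\rho$ centred in $B_\rho$, and cover $B_\rho$.
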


\color{black}
The proof of previous Theorems is achieved
through the following steps:
\begin{itemize}
\item The first step consists in an \textit{a priori} estimate for \textit{smooth minimizers} of the functional at \eqref{functional} with constants depending only on the assumptions \eqref{(F2)(F3)}, \eqref{(F4)} and \eqref{F5F6}.
%\\
%\colorbox{yellow}{qui metterei (1.2), (1.3), (1.4), (1.6)}
\vskip.2cm
\item The second step consists in constructing a suitable  approximation of the energy integral  in \eqref{functional} by means a sequence of regular functionals  with a unique \textit{smooth}  minimizers $u_h$.
\vskip.2cm
\item  The final  step consists in applying the a priori estimate to each  $u_h$  and in showing that the sequence $(u_h)$  converges to a  minimizer of the original functional, which preserves the regularity properties. 
\end{itemize}
The originality of our result mainly relies in the first step, where  the right condition on the interplay between the regularity of the coefficients and the growth assumption is the main issue. Indeed, this is where  the extra regularity of the coefficients  with respect the $W^{1,n}$ integrability needs to slow down the rate of growth  at $\infty$ of the ellipticity ratio.
Once this first step is achieved, the Uhlenbeck structure of the integrands allows to construct the approximating problems following a quite standard procedure.

A generalization of such an interplay between the nonstandard growth and the regularity of the partial map $x\to F(x,\xi)$ will be presented in a forthcoming paper \cite{EPdN25}.

\section{Preliminary result}
\medskip
In this paper we shall denote by $C$  a
general positive constant that may vary on different occasions, even within the
same line of estimates.
Relevant dependencies  will be suitably emphasized using
parentheses or subscripts.  In what follows, $B(x,r)=B_r(x)=\{y\in \R^n:\,\, |y-x|<r\}$ will denote the ball centered at $x$ of radius $r$.
We shall omit the dependence on the center and on the radius when no confusion arises.
We recall the following well known iteration lemma, whose
proof can be found, e.g. in \cite[Lemma 6.1, p.191]{Giusti}.

  \begin{lemma}\label{lem:Giaq}
    For $0<R_1<R_2$, consider a bounded function
    $f:[R_1,R_2]\to[0,\infty)$ with
    \begin{equation*}
      f(r_1)\le\vartheta f(r_2)+\frac A{(r_2-r_1)^\alpha}+ \frac B{(r_2-r_1)^\beta}+ C
       \qquad\mbox{for all }R_1<r_1<r_2<R_2,
    \end{equation*}
    where $A,B,C$, and $\alpha,\beta$ denote nonnegative constants
    and $\vartheta\in(0,1)$. Then we have
    \begin{equation*}
      f(R_1)\le c(\alpha,\beta,\vartheta)
      \bigg(\frac A{(R_2-R_1)^\alpha}+\frac B{(R_2-R_1)^\beta}+C
      \bigg).
    \end{equation*}
  \end{lemma}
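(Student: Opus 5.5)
The statement immediately above is the classical iteration lemma (Lemma~\ref{lem:Giaq}). The plan is the standard geometric-sequence argument, with the Giaquinta--Giusti choice of radii. Fix a parameter $\tau\in(0,1)$, to be chosen depending only on $\vartheta,\alpha,\beta$. Define radii $\rho_0=R_1$ and $\rho_{i+1}=\rho_i+(1-\tau)\tau^i(R_2-R_1)$. Then $\rho_i$ increases to $R_2$, and $\rho_{i+1}-\rho_i=(1-\tau)\tau^i(R_2-R_1)$. The hypothesis requires strict inequalities $R_1<r_1$, but since $f$ is only evaluated at interior points, one either notes that the inequality extends to $r_1=R_1$ (as in the standard formulation) or runs the argument from $R_1+\epsilon$ and lets $\epsilon\to0$. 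Some care is needed there: boundedness of $f$ alone does not give continuity, so I would simply interpret the hypothesis as holding for $R_1\le r_1<r_2\le R_2$, which is how it is used in the paper.

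Applying the hypothesis with $r_1=\rho_i$ and $r_2=\rho_{i+1}$ gives
\[
f(\rho_i)\le \vartheta f(\rho_{i+1})+\frac{A\,\tau^{-i\alpha}}{(1-\tau)^\alpha(R_2-R_1)^\alpha}+\frac{B\,\tau^{-i\beta}}{(1-\tau)^\beta(R_2-R_1)^\beta}+C .
\]
Iterating $k$ times yields
\[
f(R_1)\le \vartheta^k f(\rho_k)+\sum_{i=0}^{k-1}\vartheta^i\Big[\frac{A\,\tau^{-i\alpha}}{(1-\tau)^\alpha(R_2-R_1)^\alpha}+\frac{B\,\tau^{-i\beta}}{(1-\tau)^\beta(R_2-R_1)^\beta}+C\Big].
\]

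The key step is choosing $\tau$ so that the series converge. I would take $\tau$ with $\vartheta\tau^{-\max(\alpha,\beta)}<1$, for instance $\tau^{\max(\alpha,\beta)}=(1+\vartheta)/2$ when $\max(\alpha,\beta)>0$, and any $\tau$ otherwise. Then $\vartheta\tau^{-\alpha}<1$ and $\vartheta\tau^{-\beta}<1$, so all three geometric series are bounded by constants depending only on $\vartheta,\alpha,\beta$. Since $f$ is bounded, say $f\le M$, the term $\vartheta^k f(\rho_k)\le\vartheta^k M\to0$. Letting $k\to\infty$ gives the claim with
\[
c=(1-\tau)^{-\max(\alpha,\beta)}\Big(\frac{1}{1-\vartheta\tau^{-\alpha}}+\frac{1}{1-\vartheta\tau^{-\beta}}\Big)+\frac{1}{1-\vartheta}.
\]

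There is no real obstacle here. The only points needing attention are the choice of $\tau$ balancing the decay $\vartheta^i$ against the blow-up $\tau^{-i\alpha}$, and the use of the boundedness of $f$ to discard the remainder term $\vartheta^k f(\rho_k)$.
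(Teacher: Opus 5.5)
Your proof is correct and is the standard geometric-radii argument of Giusti's Lemma 6.1, which the paper cites rather than reproving. Your endpoint remark is also right: with strict inequalities the hypothesis says nothing about $f(R_1)$, so you need the inequality at $r_1=R_1$ (as in Giusti) or monotonicity of $f$, which holds in the paper's application $f(t)=\|V(Du)\|_{L^\infty(B_t)}$.
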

  We shall use the elementary inequality that is contained in the following 
\begin{lemma}\label{elem1} Let $\varphi:{[1,+\infty)}\to [0,+\infty)$ be the function defined by
$$ \varphi(t)=\frac{t^\beta}{\log^{\alpha}(t)},$$
with $\alpha,\beta>0$. Then
$$\varphi(st)\le 2^{\alpha}\sqrt{\varphi(s^2)\varphi(t^2)}.$$
\end{lemma}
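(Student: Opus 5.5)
The plan is a direct computation: write both sides explicitly and reduce the claim to an inequality between logarithms, which follows from the arithmetic--geometric mean inequality. Fix $s,t\ge 1$ and first treat the generic case $s,t>1$, so that every logarithm involved is strictly positive. By definition,
$$
\varphi(st)=\frac{s^\beta t^\beta}{\log^\alpha(st)},\qquad
\varphi(s^2)\varphi(t^2)=\frac{s^{2\beta}t^{2\beta}}{\log^\alpha(s^2)\log^\alpha(t^2)}=\frac{s^{2\beta}t^{2\beta}}{4^{\alpha}\log^\alpha s\,\log^\alpha t}.
$$
Hence
$$
2^\alpha\sqrt{\varphi(s^2)\varphi(t^2)}=\frac{s^\beta t^\beta}{(\log s\,\log t)^{\alpha/2}} .
$$
The power factors $s^\beta t^\beta$ coincide on both sides. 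Since $\alpha>0$, the claim is therefore equivalent to
$$
(\log s\,\log t)^{1/2}\le \log(st)=\log s+\log t .
$$

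This last inequality follows from the AM--GM inequality applied to $a=\log s>0$ and $b=\log t>0$:
$$
\sqrt{ab}\le \frac{a+b}{2}\le a+b .
$$
This argument even yields the sharper constant $1$ in place of $2^\alpha$, but the stated form is all that is needed.

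The only point requiring care is the boundary of the domain, where $s=1$ or $t=1$. There $\log(s^2)=0$ or $\log(t^2)=0$, so $\varphi(s^2)$ or $\varphi(t^2)$ equals $+\infty$ (with the natural convention $c/0=+\infty$ for $c>0$). The right-hand side is then $+\infty$ and the inequality holds trivially. This includes the case $s=t=1$, where both sides are $+\infty$.

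I expect no genuine obstacle. The main thing is to state this convention for $\varphi(1)$ explicitly, or equivalently to observe that in the applications the lemma is only used for arguments strictly larger than $1$.
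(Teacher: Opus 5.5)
Your proof is correct and is essentially the paper's argument: a direct computation that reduces the claim to $\sqrt{\log s\,\log t}\le \log s+\log t$ for $s,t>1$. The paper writes $\log^\alpha(st)=(\log s+\log t)^{\alpha/2}(\log s+\log t)^{\alpha/2}$ and drops one nonnegative summand in each factor, which produces the constant $2^\alpha$ (its final ``$=$'' should read ``$\le$''); your AM--GM step instead gives constant $1$, and your handling of $s=1$ or $t=1$, where $\varphi$ is not finite, covers a point the paper leaves implicit.
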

\begin{proof} One can explicitly calculate
\begin{eqnarray*}
  \varphi(st)&=& {\frac{(st)^\beta}{\log^{\alpha}(st)}}\cr\cr 
  &=&\frac{s^\beta}{(\log(s)+\log(t))^{\frac{\alpha}{2}}}\frac{t^\beta}{(\log(s)+\log(t))^{\frac{\alpha}{2}}}\cr\cr 
  &=&\frac{s^\beta}{\left(\frac12\log(s^2)+\frac12\log(t^2)\right)^{\frac{\alpha}{2}}}\frac{t^\beta}{\left(\frac12\log(s^2)+\frac12\log(t^2)\right)^{\frac{\alpha}{2}}}\cr\cr 
  &=&2^{\alpha}\sqrt{\varphi(s^2)\varphi(t^2)} 
\end{eqnarray*}
and this concludes the proof.
\end{proof}
\color{black}
For further needs, we recall that for a non-trivial function $\Phi:[0,+\infty)\to [0,+\infty)$,
its polar (or Fenchel conjugate)  is defined by
\begin{equation}
\label{Fconiug}
\Phi^*(s) := \sup_{t \ge 0} \big \{st - \Phi(t) \big \} \qquad {\forall \,}s\ge 0.
\end{equation}
The function $\Phi^*: [0,+\infty) \rightarrow [0,+\infty)$ is convex and   the Young-type (or Fenchel) inequality   directly follows from its  definition
\begin{equation}
\label{Fenchel}
st \le \, \Phi^*(s) + {\Phi(t)}
\end{equation}
for all $s,t\ge 0$.
\\
We conclude by recalling the  following
\begin{lemma}\label{GGP} Let $\varphi:[0,+\infty)\to [0,+\infty)$ be  increasing and diverging at $\infty$. Then there exists $s_0>0$ such that 
$$\frac{s}{\psi(s)}\le \varphi(s)\le 2\frac{s}{\psi(s)},\qquad \text{for every } s>s_0$$
where for $\psi(t)=[(\varphi^{-1})^*]^{-1}(t)$.
\end{lemma}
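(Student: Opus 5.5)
Set $\Phi:=\varphi^{-1}$, which is increasing and diverging at $\infty$. Then $\psi=(\Phi^*)^{-1}$. The plan is to substitute $s=\Phi(\sigma)$, equivalently $\sigma=\varphi(s)$, and reduce both inequalities to statements about $\Phi^*$ evaluated at the point $\Phi(\sigma)/\sigma$.

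\textbf{Reduction.} With this substitution, the claim $s/\psi(s)\le\varphi(s)\le 2s/\psi(s)$ reads
$$\Phi(\sigma)\le \sigma\,(\Phi^*)^{-1}(\Phi(\sigma))\le 2\Phi(\sigma).$$
Since $\Phi^*$ is convex, nondecreasing, and strictly increasing once it is positive, this is equivalent, for $\sigma$ large, to
$$\Phi^*\Big(\frac{\Phi(\sigma)}{\sigma}\Big)\le \Phi(\sigma)\le \Phi^*\Big(\frac{2\Phi(\sigma)}{\sigma}\Big).$$
So it suffices to prove these two inequalities for all $\sigma$ larger than some threshold, and then take $s_0=\Phi$ of that threshold.

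\textbf{Right inequality.} This one is immediate from the definition \eqref{Fconiug}. Choosing $t=\sigma$ in the supremum gives
$$\Phi^*\Big(\frac{2\Phi(\sigma)}{\sigma}\Big)\ge 2\Phi(\sigma)-\Phi(\sigma)=\Phi(\sigma).$$
It needs no structural assumption on $\Phi$.

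\textbf{Left inequality.} We must show that for every $t\ge0$,
$$t\,\frac{\Phi(\sigma)}{\sigma}-\Phi(t)\le \Phi(\sigma).$$
For $t\le\sigma$ this holds trivially, because $t\Phi(\sigma)/\sigma\le\Phi(\sigma)$ and $\Phi\ge0$. For $t>\sigma$ it suffices that $\Phi(t)/t\ge\Phi(\sigma)/\sigma$, i.e.\ that $t\mapsto\Phi(t)/t$ is nondecreasing on $[\sigma_0,\infty)$.

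This monotonicity is the main obstacle. Mere monotonicity of $\varphi$ does not give it, so the lemma must implicitly use that $\varphi^{-1}$ is convex, or at least superlinear with $\Phi(t)/t$ eventually increasing; equivalently, $\varphi(s)/s$ is eventually nonincreasing. The same superlinearity is also needed for $\Phi^*$ to be finite and invertible, so that $\psi$ is defined at all.

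I would state this hypothesis explicitly and check it for the function actually used later, $\varphi(t)=t^\beta/\log^\alpha(t)$ with $0<\beta<1$. There a direct derivative computation shows that $\varphi$ is eventually concave, hence $\Phi$ is eventually convex. Because $\Phi(\sigma_0)/\sigma_0$ is fixed, convexity of $\Phi$ on $[\sigma_0,\infty)$ implies that $\Phi(t)/t$ is nondecreasing for $t$ beyond a possibly larger threshold. With that in hand, both inequalities hold for $\sigma$ large, and undoing the substitution gives the lemma with $s_0=\Phi(\sigma_0)$.
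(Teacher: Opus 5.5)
The paper does not prove this lemma; it only cites \cite[Lemma 2.4]{GGP} and \cite[Lemma 2.1]{GianGrePas}. Your argument is the classical proof of the Young-function inequality $\sigma\le\Phi^{-1}(\sigma)\,(\Phi^*)^{-1}(\sigma)\le 2\sigma$, read with $\Phi=\varphi^{-1}$, and it is correct.

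\begin{itemize}
\item The right inequality comes from testing the supremum at $t=\sigma$, and needs no structure on $\Phi$.
\item The left inequality reduces to $\Phi(t)/t$ being nondecreasing beyond some $\sigma_0$.
\end{itemize}

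What your self-contained route gives over the citation is that it exposes a missing hypothesis. As literally stated, the lemma is false:
\begin{itemize}
\item For $\varphi(s)=s^2$ one has $(\varphi^{-1})^*\equiv+\infty$ on $(0,\infty)$, so $\psi$ is not even defined.
\item Take a superlinear, strictly increasing $\Phi$ with almost flat stretches, so that $\Phi(3\sigma_k)<2\Phi(\sigma_k)$ along $\sigma_k\to\infty$. Testing at $t=3\sigma_k$ gives $\Phi^*(\Phi(\sigma_k)/\sigma_k)>\Phi(\sigma_k)$. Hence the left inequality fails at $s=\Phi(\sigma_k)$.
\end{itemize}
Your added assumption, that $\varphi(s)/s$ is eventually nonincreasing and tends to $0$, is the right one. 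It is not cosmetic: the paper uses exactly the left inequality, in the step $\|V(Du)\|/\widehat L(\|V(Du)\|)\le[(\widehat L^{-1})^*]^{-1}(\|V(Du)\|)$.

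One correction to your side check. Lemma \ref{GGP} is not applied to $t^\beta/\log^\alpha t$. That function only appears in Lemma \ref{elem1}, to describe $L^{-1}$, and there $\beta=1/(r-n)$ need not be smaller than $1$. The lemma is applied to $\widehat L$ from \eqref{defL}, and for $\widehat L$ the hypothesis can be checked directly, with no concavity needed:
\begin{itemize}
\item Write $\widehat L=g^\kappa$ with $g(s)=\log\big(e+L^{-1}(s^{2n(q-p)})\big)$ and $\kappa=\frac{\alpha n}{2p(r-n)}$.
\item The bound $L(u)/L'(u)\le u/(r-n)$ gives $s\,g'(s)\le\frac{2n(q-p)}{r-n}$.
\item Hence $s\,\widehat L'(s)/\widehat L(s)\le\frac{2n(q-p)\,\kappa}{(r-n)\,g(s)}\to 0$.
\end{itemize}
So $\widehat L(s)/s$ is eventually decreasing and tends to $0$, which is all your argument needs.

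If you instead go through convexity of $\Phi$, note that getting eventual monotonicity of $\Phi(t)/t$ also needs $\Phi'(t)\to\infty$ (superlinearity). Convexity alone does not suffice: an eventually affine $\Phi$ with small slope is a counterexample.
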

For the proof we refer to \cite[Lemma 2.4]{GGP}, see also \cite[Lemma 2.1]{GianGrePas}.
\section{A Caccioppoli type inequality}
This section is devoted to the proof of a suitable second order Caccioppoli inequality that is the first step in the Moser iteration procedure. Even though such inequalities are available in literature in slighly different forms, we give the proof here for the sake of completeness. The proof is based on the a priori assumptions $u\in W^{1,\infty}_{\loc}(\Omega)$ and $(\mu^2+|Du|^2)^{\frac{p-2}{4}}Du\in W^{1,2}_{\loc}(\Omega)$ which allows to the test the second variation of our functional with functions that are proportionals to the second derivatives of the minimizers.
\begin{lemma}
Let $u\in W^{1,\infty}_{\loc}(\Omega)$ such that $(\mu^2+|Du|^2)^{\frac{p-2}{4}}Du\in W^{1,2}_{\loc}(\Omega)$ be  a local minimizer of the functional at \eqref{functional} under the assumptions \eqref{(F2)(F3)} and \eqref{(F4)}. Then the following second order Caccioppoli type inequality
\begin{eqnarray}\label{Caccioppoli}
 && \int_{\Omega} \eta^2 (\mu^2 + |Du|^2)^{\frac{p-2+\gamma}{2}}  |D^2 u|^2 \, dx\cr\cr
&\le & C (1+\gamma^2)  \, \int_{\Omega} {\eta^2} h^2(x)(\mu^2 + |Du|^2)^{\frac{2q-p+\gamma}{2} }  \, dx\cr\cr
&&+ \, C \int_{\Omega} |D \eta|^2 \,  (\mu^2 + |Du|^2)^{\frac{{p +\gamma}}{2}}  \, dx+C \int_{\Omega} |D \eta|^2 \,  (\mu^2 + |Du|^2)^{\frac{{q +\gamma}}{2}}  \, dx,
\end{eqnarray}
holds true for every $\gamma \ge \,0,\,$ and for every $\eta\in C^1_0(\Omega).$
\end{lemma}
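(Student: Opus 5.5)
The plan is to differentiate the Euler--Lagrange equation and test it with $\varphi=\eta^2(\mu^2+|Du|^2)^{\gamma/2}u_{x_s}$, summing over $s$. Since $u$ is a local minimizer with $F$ of class $C^2$ in $\xi$ and $u\in W^{1,\infty}_{\loc}$, the weak equation
$$\int_\Omega \sum_i F_{\xi_i}(x,Du)\,\psi_{x_i}\,dx=0$$
holds for every $\psi\in W^{1,\infty}_0$ with compact support. We take $\psi=\varphi_{x_s}$ and integrate by parts in $x_s$ (difference quotients justify this, using $(\mu^2+|Du|^2)^{\frac{p-2}{4}}Du\in W^{1,2}_{\loc}$ and the Lipschitz bound). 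This gives the second variation identity
$$\int_\Omega \sum_{i,j}F_{\xi_i\xi_j}(x,Du)\,u_{x_jx_s}\,\varphi_{x_i}\,dx=-\int_\Omega \sum_i F_{\xi_ix_s}(x,Du)\,\varphi_{x_i}\,dx .$$
Set $V=\mu^2+|Du|^2$. Then
$$\varphi_{x_i}=\eta^2V^{\gamma/2}u_{x_sx_i}+\gamma\eta^2V^{\frac{\gamma-2}{2}}(Du\cdot Du_{x_i})\,u_{x_s}+2\eta\eta_{x_i}V^{\gamma/2}u_{x_s}.$$

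Inserting this splits the left side into three terms. The first term, by the lower bound in \eqref{(F2)(F3)}, controls
$$\lambda\int\eta^2V^{\frac{p-2+\gamma}{2}}|D^2u|^2 .$$
The second term is nonnegative after summing over $s$: it equals $\gamma\eta^2V^{\frac{\gamma-2}{2}}\sum_{ij}F_{\xi_i\xi_j}\,\partial_j(\tfrac12 V)\,\partial_i(\tfrac12 V)\ge0$ by ellipticity, so it can be dropped. The third term is estimated with the Cauchy--Schwarz inequality for the quadratic form $F_{\xi\xi}$ and Young's inequality. Using the upper bound $\Lambda V^{\frac{q-2}{2}}$, it is at most
$$\varepsilon\int\eta^2V^{\frac{q-2+\gamma}{2}}|D^2u|^2+C_\varepsilon\int|D\eta|^2V^{\frac{q+\gamma}{2}} .$$

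This step is the main obstacle. The absorbed term carries the weight $V^{(q-2)/2}$, not $V^{(p-2)/2}$, so it cannot be absorbed directly into the left side when $q>p$. The Uhlenbeck structure $F=\tilde F(x,|\xi|)$ resolves this. For radial $F$, the Hessian has eigenvalues $\tilde F_{tt}$ in the direction of $Du$ and $\tilde F_t/t$ in the orthogonal directions. I would use $F_{\xi\xi}(Du)Du\parallel Du$ to rewrite
$$\sum_{i,j,s}F_{\xi_i\xi_j}u_{x_jx_s}\eta_{x_i}u_{x_s}=\tfrac12\sum_{i,j}F_{\xi_i\xi_j}\,\partial_j V\,\eta_{x_i}.$$
Cauchy--Schwarz for the form then gives
$$\varepsilon\,\gamma\text{-free}\ \eta^2V^{\frac{\gamma-2}{2}}\langle F_{\xi\xi}DV,DV\rangle+C\,|D\eta|^2V^{\frac{\gamma+2}{2}}\langle F_{\xi\xi}e,e\rangle/\ldots .$$
The first piece is absorbed by the nonnegative second term, but only when $\gamma>0$. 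For $\gamma=0$ it is absorbed by the first term, with a bound of order $V^{(q-2)/2}$ in the worst direction. If this fine bookkeeping fails, I would instead accept the cruder estimate and keep both $|D\eta|^2V^{(p+\gamma)/2}$ and $|D\eta|^2V^{(q+\gamma)/2}$ on the right, as the statement allows. Presumably the $p$-term comes from the $\mu$/small-gradient region and the $q$-term from the upper bound. In every case the $|D\eta|^2$ terms come with constants independent of $\gamma$.

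The right-hand side is handled with \eqref{(F4)}:
$$\Big|\int F_{\xi x}\varphi_{x}\Big|\le\int h\,V^{\frac{q-1}{2}}\big(\eta^2V^{\gamma/2}|D^2u|(1+\gamma)+2\eta|D\eta|V^{\frac{\gamma+1}{2}}\big).$$
Young's inequality with weight $V^{\frac{p-2+\gamma}{2}}$ gives
$$\varepsilon\int\eta^2V^{\frac{p-2+\gamma}{2}}|D^2u|^2+C(1+\gamma)^2\int\eta^2h^2V^{\frac{2q-p+\gamma}{2}} ,$$
which produces exactly the exponent $\frac{2q-p+\gamma}{2}$. The cross term is bounded by
$$\eta^2h^2V^{\frac{2q-p+\gamma}{2}}+|D\eta|^2V^{\frac{p+\gamma}{2}} .$$
Absorbing the $\varepsilon$-terms and using $(1+\gamma)^2\le 2(1+\gamma^2)$ yields \eqref{Caccioppoli}.
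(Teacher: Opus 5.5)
There is a genuine gap, located at the step you call the main obstacle: the term $2\int\eta\,V^{\gamma/2}\sum_{i,j,s}F_{\xi_i\xi_j}(x,Du)\,u_{x_jx_s}\,\eta_{x_i}\,u_{x_s}\,dx$, with your $V=\mu^2+|Du|^2$. The obstacle comes from your own order of operations. After Cauchy--Schwarz you applied the upper bound $\Lambda V^{\frac{q-2}{2}}$ to the second-derivative factor. That produces $\varepsilon\int\eta^2V^{\frac{q-2+\gamma}{2}}|D^2u|^2$, which indeed cannot be absorbed when $q>p$.

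The Uhlenbeck-structure detour that follows does not close the argument. For $\gamma>0$, absorbing into the $\gamma$-term forces $\varepsilon\lesssim\gamma$, so the constant in front of $|D\eta|^2V^{\frac{q+\gamma}{2}}$ degenerates like $1/\gamma$. The case $\gamma=0$ is left as ``bookkeeping''. Your fallback ``cruder estimate'' is exactly the non-absorbable one you had just rejected. As written, the proposal does not prove the inequality.

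The missing idea is not to estimate the quadratic form on the $D^2u$ side at all. Apply Cauchy--Schwarz for the nonnegative form $F_{\xi\xi}(x,Du)$ for each $s$, then sum, then use Young. This gives
\[
|I_1|\le C\int V^{\gamma/2}\sum_{i,j,s}F_{\xi_i\xi_j}\eta_{x_i}\eta_{x_j}u_{x_s}^2\,dx+\frac12\int\eta^2V^{\gamma/2}\sum_{i,j,s}F_{\xi_i\xi_j}u_{x_sx_i}u_{x_sx_j}\,dx .
\]
The last integral is exactly half of your ``first term'' $\int\eta^2V^{\gamma/2}\sum_{i,j,s}F_{\xi_i\xi_j}u_{x_jx_s}u_{x_ix_s}\,dx$, taken \emph{before} ellipticity is used. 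Absorb it there. Only afterwards use the lower bound in \eqref{(F2)(F3)}, which gives $\frac{\lambda}{2}\int\eta^2V^{\frac{p-2+\gamma}{2}}|D^2u|^2$ on the left. Then absorb the $\varepsilon$-terms coming from the $F_{\xi x}$ integrals, for instance with $\varepsilon=\lambda/8$.

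The upper bound $\Lambda V^{\frac{q-2}{2}}$ is needed only on the $D\eta$ factor. There it gives $C\int|D\eta|^2V^{\frac{q+\gamma}{2}}$ with $C$ independent of $\gamma$. This is where the $q$-term in the statement comes from. The $p$-term does not come from a small-gradient region; it is the Young splitting of the $h$ cross term, as in your last paragraph. The Uhlenbeck structure plays no role in this lemma; the paper uses it only in the approximation.

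With this fix, the rest of your argument agrees with the paper's proof: dropping the nonnegative $\gamma$-term, the $F_{\xi x}$ estimates, and the exponent $\frac{2q-p+\gamma}{2}$.
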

\color{black}
\begin{proof}
By the assumptions 
\begin{equation}
\label{regE}
u \in  W^{1,\infty}_{\loc}(\Omega)\,\, \text{and}  \,\, (\mu^2+|Du|^2)^{\frac{p-2}{4}}Du\in W^{1,2}_{\loc}(\Omega),
\end{equation}
we have that the following system
\begin{equation}
\label{second_variation}
\int_{\Omega} \left (\sum_{i,j=1}^n  F_{\xi_i \xi_j}(x, Du)u_{x_j x_s}D_{x_i} \varphi  + \sum_{i=1}^n F_{\xi_i x_s}(x, Du) D_{x_i} \varphi \right ) \, dx = 0,
\end{equation}
holds for all $s = 1, \dots, n$ and for all $\varphi \in W^{1,p}_0(\Omega)$. Let $\eta \in \mathcal{C}^1_0(\Omega)$ and test \eqref{second_variation} with $\varphi=\eta^2
({\mu}^2+|Du|^2)^{\frac{\gamma}{2}} u_{x_s}$, for some $\gamma \ge 0$ so that
\begin{eqnarray*}
 D_{x_i} \varphi &=&2 \eta \eta_{x_i} ({\mu}^2 +|Du|^2)^{\frac{\gamma}{2}} u_{x_s}+ \eta^2 ({\mu}^2 +|Du|^2)^{\frac{\gamma}{2}} u_{x_s x_i}  \\
 &&+  \eta^2 \gamma ({\mu}^2 +|Du|^2)^{\frac{\gamma}{2}-1} |Du| D_{x_i}(|Du|) u_{x_s}.  
\end{eqnarray*} 
Inserting in \eqref{second_variation} we get
\begin{eqnarray*}
0 &=& 2\int_{\Omega} \sum_{i,j=1}^n F_{\xi_i \xi_j}(x, Du) u_{x_j x_s} \eta \eta_{x_i} ({\mu}^2 +|Du|^2)^{\frac{\gamma}{2}} u_{x_s} \, dx\\
&& +  \int_{\Omega} \sum_{i,j=1}^n F_{\xi_i \xi_j}(x, Du) u_{x_j x_s} \eta^2 ({\mu}^2 +|Du|^2)^{\frac{\gamma}{2}} u_{x_s x_i} \, dx\\
&& + \gamma \int_{\Omega} \sum_{i,j=1}^n F_{\xi_i \xi_j}(x, Du) u_{x_j x_s} \eta^2   ({\mu}^2 +|Du|^2)^{\frac{\gamma}{2}-1} |Du| D_{x_i}(|Du|) u_{x_s} \, dx\\
&& + 2\int_{\Omega} \sum_{i=1}^n F_{\xi_i x_s}(x, Du) \eta \eta_{x_i}  ({\mu}^2 +|Du|^2)^{\frac{\gamma}{2}} u_{x_s} \, dx\\
&& +  \int_{\Omega} \sum_{i=1}^n F_{\xi_i x_s }(x, Du) \eta^2 ({\mu}^2 + |Du|^2)^{\frac{\gamma}{2}}  u_{x_s x_i} \, dx\\
&& +  \gamma\int_{\Omega} \sum_{i=1}^n F_{\xi_i x_s}(x, Du) \eta^2   ({\mu}^2 + |Du|^2)^{\frac{\gamma}{2}-1}  |Du| D_{x_i}(|Du|) u_{x_s} \, dx .
\end{eqnarray*}
Let us sum in the previous  equation  all terms with respect to $s$ from 1 to $n$, and we denote by ${I_1-I_6}$ the corresponding integrals.
\\
Previous equality yields
\begin{equation}\label{start}
	I_2+I_3\le |{I}_1|+|{I}_4|+|{I}_5|+|{I}_6|.
\end{equation}
Using  the left inequality in assumption \eqref{(F2)(F3)} and  that $D_{x_j}(|Du|)|Du|={\sum_{s=1}^n u_{x_j x_s} u_{x_s}}$, we can estimate the term $I_3$ as follows: 
\begin{eqnarray*}\label{I3}
{I}_3 &=& \gamma\int_{\Omega}  \sum_{i,j,s =1}^n F_{\xi_i \xi_j}(x, Du) u_{x_j x_s} \left [\eta^2   ({\mu}^2 + |Du|^2)^{\frac{\gamma}{2}-1}  D_{x_i}(|Du|)|Du| \right ] u_{x_s} \, dx\cr\cr
%&\ge& 2 \gamma \, \int_{\Omega} \eta^2 |Du|^{2\gamma-1} \sum_{i,j,s =1}^n f_{\xi_i \xi_j}(x, Du) D_{x_i}(|Du|) u_{x_j x_s} u_{x_s} \, dx\\
%&=& 2 \gamma \, \int_{\Omega} \eta^2 |Du|^{2\gamma-1} \sum_{i,j =1}^n f_{\xi_i \xi_j}(x, Du) D_{x_i}(|Du|) \left(\sum_{s=1}^n u_{x_j x_s} u_{x_s} \right)\, dx\\
&\ge&  \gamma \, \int_{\Omega} \eta^2 ({\mu}^2+|Du|^2)^{\frac{\gamma}{2}-1} {|Du|^2} \sum_{i,j =1}^n F_{\xi_i \xi_j}(x, Du) D_{x_i}(|Du|) D_{x_j}(|Du|) \, dx\cr\cr
&\ge&  \gamma \, {\lambda} \, \int_{\Omega} \eta^2 ({\mu}^2+|Du|^2)^{\frac{\gamma}{2}-1} {|Du|^2}  |D(|Du|)|^2  ({\mu}^2 + |Du|^2)^{\frac{p-2}{2}}  \, dx \ge 0.
\end{eqnarray*} 
%\textcolor{teal}{mi sembra che ci manchi un quadrato, ma controlla anche tu. Non cambia il risultato alla fine.}\textcolor{magenta}{Si. Hai ragione. Perciò ho tolto il rosso}
Therefore, the non negative term $I_3$ can be discarded in the left hand side of  \eqref{start} that implies
\begin{eqnarray}\label{ristart}
	I_2\le |{I}_1|+|{I}_4|+|{I}_5|+|{I}_6|.
\end{eqnarray}
By  Cauchy-Schwarz and Young's inequalities and  the right inequality in assumption  \eqref{(F2)(F3)}, we have\\
\begin{eqnarray}\label{I1}
|{I}_1| &=& 2\left |\int_{\Omega}  \eta  ({\mu}^2 + |Du|^2)^{\frac{\gamma}{2}}   \sum_{i,j,s =1}^n F_{\xi_i \xi_j}(x, Du) u_{x_j x_s} \eta_{x_i} u_{x_s} \, dx\right |\cr\cr
%& = &
%2\Bigg| \int_{\Omega}  \eta  (1 + |Du|^2)^{\gamma} \cr\cr
%&& \times \sum_{s=1}^n \left( \sum_{i,j=1}^n F_{\xi_i \xi_j}(x, Du) \eta_{x_i} \eta_{x_j} u_{x_s}^2 \right)^{1/2} \left(\sum_{i,j=1}^n
%F_{\xi_i \xi_j}(x, Du) u_{x_s x_i} \, u_{x_s x_j} \right)^{1/2} \, dx \Bigg| \cr\cr
&\le& 2\int_{\Omega}  \eta  ({\mu}^2 + |Du|^2)^{\frac{\gamma}{2}} \cr\cr
&& \times \left \{ \sum_{i,j,s =1}^n F_{\xi_i \xi_j}(x, Du) \eta_{x_i} \eta_{x_j} u_{x_s}^2\right \}^{1/2} \, \left \{ \sum_{i,j,s =1}^n F_{\xi_i \xi_j}(x, Du) u_{x_s x_i} \, u_{x_s x_j}\right \}^{1/2} \, dx \cr\cr
&\le &  C   \int_{\Omega}  ({\mu}^2 + |Du|^2)^{\frac{\gamma}{2}}  \sum_{i,j,s =1}^n F_{\xi_i \xi_j}(x, Du) \eta_{x_i} \eta_{x_j} u_{x_s}^2 \, dx\cr\cr
&&  + \frac{1}{2} \int_{\Omega} \eta^2  ({\mu}^2 + |Du|^2)^{\frac{\gamma}{2}}  \sum_{i,j,s =1}^n F_{\xi_i \xi_j}(x, Du) u_{x_s x_i} \, u_{x_s x_j}  \, dx \cr\cr
&\le& {C(\Lambda)} \int_{\Omega} |D \eta|^2 \,  {({\mu}^2 + |Du|^2)^{\frac{q + \gamma}{2}}}\, dx \cr\cr
&& + \frac{1}{2} \int_{\Omega} \eta^2  ({\mu}^2 + |Du|^2)^{\frac{\gamma}{2}}  \, \sum_{i,j,s =1}^n F_{\xi_i \xi_j}(x, Du) u_{x_j x_s} u_{x_i x_s} \, dx.
\end{eqnarray}
We  estimate the integrals $I_4$ and $I_5$ in \eqref{ristart} by the use of  Cauchy-Schwarz and  Young's inequalities, together with \eqref{(F4)}, as follows
\begin{comment}
 \textcolor{teal}{ho aggiunto il valore assoluto alla prima riga; la costante nella F4 non c'è; la costante in Cauchy-Schwarz mi pare che non ci sia}   
\end{comment}
\begin{eqnarray}\label{I4}
|{I}_4| &=& \left | 2 \int_{\Omega} \eta  (1 + |Du|^2)^{\frac{\gamma}{2}}  \sum_{i,s =1}^n F_{\xi_i x_s}(x, Du) \eta_{x_i} u_{x_s} \, dx \right |\cr\cr
&{\le}&  2\,  \int_{\Omega} \eta h(x) \, {({\mu}^2 + |Du|^2)^{\frac{q-1+\gamma}{2}}} \sum_{i,s=1}^n |\eta_{x_i} u_{x_s} | \, dx \cr\cr
&\le&   2\,  \int_{\Omega} \eta |D\eta| |Du| \, h(x) \, ({\mu^2} + |Du|^2)^{\frac{{q-1+\gamma}}{2}}  \, dx \cr\cr
&\le& 2\int_{\Omega} \eta|D\eta| h(x) {({\mu}^2 + |Du|^2)^{\frac{q+\gamma}{2}}}\,dx\cr\cr
&=&  2\, \int_{\Omega} \left[|D \eta|^2 ({\mu^2}+|Du|^2)^{\frac{p+\gamma}{2}} \right]^{\frac{1}{2}} \left[\eta^2 ({\mu^2}+|Du|^2)^{\frac{{2 q - p+\gamma}}{2}}h^2(x)\right]^{\frac{1}{2}} \, dx 
\cr\cr
&\le&  \frac{1}{2} \, \int_{\Omega} |D \eta|^2 ({\mu^2}+|Du|^2)^{\frac{p+\gamma}{2}} \, dx\cr \cr
&& +  C \, \int_{\Omega} \eta^2\, h^2(x)({\mu^2}+|Du|^2)^{\frac{{2 q - p+\gamma}}{2}} \, dx. 
\end{eqnarray}
Moreover
\begin{eqnarray}\label{I5}
|{I}_5| &=& \left| \int_{\Omega} \eta^2 ({\mu^2} + |Du|^2)^{\frac{\gamma}{2}} \sum_{i,s=1}^n F_{\xi_i x_s }(x, Du)  u_{x_s x_i} \, dx \right| \cr\cr
&{\le}&  \int_{\Omega} \eta^2\,h(x) ({\mu^2} + |Du|^2)^{\frac{{q-1+\gamma}}{2}}  {\left | \sum_{i,s=1}^n  u_{x_s x_i} \right |}\, dx \cr\cr
&\le&\,  \, \int_{\Omega} \eta^2 h(x) ({\mu^2} + |Du|^2)^{{\frac{q-1+\gamma}{2}}} |D^2 u| \, dx \cr\cr
&=& \int_{\Omega} \left[\eta^2 ({\mu^2}+|Du|^2)^{\frac{p-2+\gamma}{2}} |D^2 u|^2 \right]^{\frac{1}{2}} \left[\eta^2\,h^2(x) ({\mu^2}+|Du|^2)^{\frac{{2 q - p+\gamma}}{2}}\right]^{\frac{1}{2}} \, dx \cr\cr
&\le& \varepsilon \int_{\Omega} \eta^2 ({\mu^2} + |Du|^2)^{\frac{p-2+\gamma}{2}}  |D^2 u|^2 \, dx\\\nonumber&& + C_\varepsilon  \int_{\Omega} \eta^2 h^2(x)({\mu^2}+|Du|^2)^{\frac{{2 q - p+\gamma}}{2}} \, dx,
\end{eqnarray}
where $\varepsilon>0$ will be chosen later.
Finally, similar arguments give 

\begin{eqnarray}\label{I6}
|{I}_6| &=&   \gamma \, \left |\int_{\Omega} \sum_{i,s=1}^n F_{\xi_i x_s}(x, Du) \eta^2  ({\mu^2} + |Du|^2)^{\frac{\gamma}{2}-1}  |Du| D_{x_i}(|Du|) u_{x_s} \, dx \right |\cr\cr
%&=&2 \gamma \, \int_{\Omega} \eta^2 (1 + |Du|^2)^{\gamma-1}  |Du| \sum_{i,s=1}^n f_{\xi_i x_s}(x, Du)  D_{x_i}(|Du|) u_{x_s} \, dx\\
&\le&  \, \gamma \, \int_{\Omega} \eta^2 ({\mu^2}+|Du|^2)^{\frac{\gamma-1}{2}}  \left |\sum_{i,s=1}^n F_{\xi_i x_s}(x, Du)  D_{x_i}(|Du|) u_{x_s} \right |\, dx  \cr\cr
&\le&  \, \gamma \, \int_{\Omega} \eta^2  h(x)({\mu^2} + |Du|^2)^{\frac{{q-2+\gamma}}{2}}\, \left | \sum_{i,s=1}^n D_{x_i}(|Du|) u_{x_s} \right | \, dx\cr\cr
&\le&  \, \gamma \, \int_{\Omega} \, \eta^2 \, h(x)({\mu^2} + |Du|^2)^{\frac{{q-1+\gamma}}{2}} |D^2 u| \, dx\cr\cr
&\le& \varepsilon \int_{\Omega} \eta^2 |D^2u|^2 ({\mu^2} + |Du|^2)^{\frac{p-2+\gamma}{2}}   \, dx\\\nonumber
&&+ C_\varepsilon \gamma^2 \,  \, \int_{\Omega} \eta^2  h^2(x)({\mu^2} + |Du|^2)^{\frac{{2q - p+\gamma}}{2} }  \, dx.
\end{eqnarray}
Plugging  \eqref{I1}, \eqref{I4}, \eqref{I5} and \eqref{I6} in \eqref{ristart} we obtain
\begin{eqnarray*}
&& \int_{\Omega} \eta^2 ({\mu^2} +|Du|^2)^{\frac{\gamma}{2}}{\sum_{i,j,s=1}^n} F_{\xi_i \xi_j}(x, Du) u_{x_j x_s}  u_{x_s x_i} \, dx\cr\cr
&\le&  \frac{1}{2} \int_{\Omega} \eta^2  ({\mu^2} + |Du|^2)^{\frac{\gamma}{2}}  \, \sum_{i,j,s =1}^n F_{\xi_i \xi_j}(x, Du) u_{x_j x_s} u_{x_i x_s} \, dx\cr\cr
&&+2\varepsilon \int_{\Omega} \eta^2 ({\mu^2} + |Du|^2)^{\frac{p-2+\gamma}{2}}  |D^2 u|^2 \, dx \cr\cr
&&+ C_\varepsilon (1+\gamma^2)  \, \int_{\Omega} \eta^2  h^2(x)({\mu^2} + |Du|^2)^{\frac{{2 q - p +\gamma}}{2} } \, dx\cr\cr
&& + C\,  {\int_{\Omega} |D \eta|^2 \,  {({\mu}^2 + |Du|^2)^{\frac{p + \gamma}{2}}}\, dx+C \int_{\Omega} |D \eta|^2 \,  {({\mu}^2 + |Du|^2)^{\frac{q + \gamma}{2}}}\, dx.}%\cr\cr
%&&+C_\varepsilon \int_{\Omega} |D \eta|^2 \,  ({\mu^2} + |Du|^2)^{\frac{\tcb{p+\gamma}}{2}}  \, dx.
\end{eqnarray*}
Reabsorbing the first integral in the right hand side by the left hand side, we derive
\begin{eqnarray*}
&& \frac{1}{2}\int_{\Omega} {\sum_{i,j,s=1}^n} \eta^2 ({\mu^2} +|Du|^2)^{\frac{\gamma}{2}}  F_{\xi_i \xi_j}(x, Du) u_{x_j x_s} u_{x_s x_i} \, dx\cr\cr
 &\le &2\varepsilon \int_{\Omega} \eta^2 ({\mu^2} + |Du|^2)^{\frac{p-2+\gamma}{2}}  |D^2 u|^2 \, dx \cr\cr
&&+ C_\varepsilon (1+\gamma^2) \,  \int_{\Omega} \eta^2 \, {h^2(x)} ({\mu^2} + |Du|^2)^{\frac{{2 q - p +\gamma}}{2}}  \, dx\cr\cr
&&+C \int_{\Omega} |D \eta|^2 \,  ({\mu^2} + |Du|^2)^{\frac{p+\gamma}{2}}  \, dx+C \int_{\Omega} |D \eta|^2 \,  ({\mu^2} + |Du|^2)^{\frac{q+\gamma}{2}}  \, dx.
\end{eqnarray*}
Using assumption \eqref{(F2)(F3)} in the left hand side of previous estimate, we obtain
\begin{eqnarray*}
&& \frac{{\lambda}}{2}\int_{\Omega} \eta^2 ({\mu^2} + |Du|^2)^{\frac{p-2+\gamma}{2}}  |D^2 u|^2 \, dx \cr\cr
 &\le &2\varepsilon \int_{\Omega} \eta^2 ({\mu^2} + |Du|^2)^{\frac{p-2+\gamma}{2}}  |D^2 u|^2 \, dx \cr\cr
&&+ C_\varepsilon (1+\gamma^2) \, \int_{\Omega} \eta^2h^2(x)({\mu^2} + |Du|^2)^{\frac{{2 q - p +\gamma}}{2} }\, dx\cr\cr
&&+C \int_{\Omega} |D \eta|^2 \,  ({\mu^2} + |Du|^2)^{\frac{p +\gamma}{2}}  \, dx + C \int_{\Omega} |D \eta|^2 \,  ({\mu^2} + |Du|^2)^{\frac{q +\gamma}{2}}  \, dx.
\end{eqnarray*}
Choosing {for instance} ${\varepsilon=\frac{\lambda }{8}}$, we can reabsorb the first integral in the right hand side by the left hand side, thus getting
\begin{eqnarray*}
&& \int_{\Omega} \eta^2 ({\mu^2} + |Du|^2)^{\frac{p-2+\gamma}{2}}  |D^2 u|^2 \, dx \cr\cr
 &\le & C (1+\gamma^2)  \, \int_{\Omega} \eta^2 h^2(x)({\mu^2} + |Du|^2)^{\frac{{2 q - p +\gamma}}{2} } \, dx\cr\cr
&&+C \int_{\Omega} |D \eta|^2 \,  ({\mu^2} + |Du|^2)^{\frac{p+\gamma}{2}}  \, dx+C \int_{\Omega} |D \eta|^2 \,  ({\mu^2} + |Du|^2)^{\frac{q+\gamma}{2}}  \, dx,
\end{eqnarray*}
i.e. the thesis. %\textcolor{teal}{controllare alla fine tutte le costanti}
\end{proof}

\medskip

\section{The a priori estimate}

In this section we shall prove a uniform a priori estimate for the Lipschitz norm of the minimizers of the functional at \eqref{functional} by the use of a Moser type iteration argument. More precisely, we are going to prove the following

\begin{theorem}\label{thmapriori}
Let $u\in W^{1,\infty}_{\loc}(\Omega)$ such that $(\mu^2+|Du|^2)^{\frac{p-2}{4}}Du\in W^{1,2}_{\loc}(\Omega)$ be  a local minimizer of the functional at \eqref{functional}, under the assumptions \eqref{(F2)(F3)} and \eqref{(F4)}. Assume that there exists $r\ge n$ such that the exponents $1<p\le q$ appearing in \eqref{(F2)(F3)}, \eqref{(F4)} satisfy the following alternative conditions: either
   
    { i)} $p=q$ and
 $h(x)$ satisfies \eqref{F5F6}  with $r=n$ and $\alpha>4n$;\\ 
 %Then
 %$u\in W^{1,\infty}_{\mathrm{loc}}(\Omega)$ and the following estimate
%\begin{equation}
%\label{maincaso1}
%{||Du||_{L^{\infty}(B_{\rho})} \le \frac{C}{( R - \rho )^{\frac{n}{p}}}  \left(\int_{B_{R}} \{1 + |Du|^p \}
%\, dx\right)^{\frac{1}{p}}}
%\end{equation}
%{holds true for every $B_\rho\subset B_R\Subset\Omega,$ with $C \equiv C(n, p, \lambda, \Lambda, h);$}
 or
 
 { ii)}  $p<q$,  $\displaystyle \frac{q}{p} < 1+\frac{1}{n}-\frac{1}{r}$, and $h(x)$ satisfies \eqref{F5F6} with $r>n$ and $\alpha=0$; 
 %Then $u\in W^{1,\infty}_{\mathrm{loc}}(\Omega)$ and the following estimate
%{\begin{eqnarray}
%\label{maincaso2}
	%||Du||_{L^{\infty}(B_{\rho})} &\le&
%\frac{C}{(R-\rho)^{n \theta}}  \left(\int_{B_{R}} \{1 + |Du|^p \}
%\, dx\right)^{\theta}
%\end{eqnarray}}
%{holds true for every $B_\rho\subset B_R\Subset\Omega,$ with $C \equiv C(n, r, p, q, \lambda, \Lambda, \mathcal{L})$ and \[
%\theta := \frac{r-n}{p(r-n)-rn(q-p)};
%\]}
%\textcolor{magenta}{stavo cercando di confrontarlo con il risultato di Advances , anche se là è diverso perché c'era $m := \frac{r}{r-2}$ - ho fatto un po' di conti e sembra veramente diverso. Il nostro esponente sembra molto più elegante. D'altra parte ci solitamente ci si aspetta che $\theta > 1$ anche se negli Advances non è scritto esplicitamente, il che equivale a 
%\[
%\frac{q}{p} > 1 + \left (1 - \frac{1}{p} \right ) \left (\frac{1}{n} - \frac{1}{r}\right )
%\]
%che sembra ragionevole (perché se il nostro bound è al pelo, è ragionevole che se ci togli roba ci vai sotto, ma non mi pare scontato. Mentre scrivo, mi viene in mente la stima di Bella-Schaffner ma non mi pare ci si possa ricondurre a quello}
\\
or

{ iii)} $p<q$,  $\displaystyle \frac{q}{p} = 1+\frac{1}{n}-\frac{1}{r}$ and $h(x)$ satisfies \eqref{F5F6} with $r>n$
 and $\alpha>0.$
 \\
 Of course, these conditions are mutually exclusive.
 Under any of them,
 estimate  \eqref{mainaptotale} with the function $G(t)$ defined by
 \begin{equation}
\label{DefinizioneG}
G(t)=
\left \{
\begin{array}{lll} \!\!\!\!\!\!&t^{\frac1p}\qquad\qquad\qquad\qquad\qquad\qquad \qquad &\text{in the case { i)}}\\
    \!\!\!\!\!\! & t^\frac{r-n}{p(r-n)-rn(q-p)} + t^{\frac1p}\,\,\quad\qquad\qquad\qquad\qquad & \text{in the case { ii)}}\\
&\exp\Big(\frac{r}{2p}\cdot t^{\frac{2p(r-n)}{\alpha n}}\Big)t^{\frac{r}{n}} + t^{\frac1p}. \qquad\quad & \text{in the case { iii)}},
\end{array}
\right.
\end{equation}
 holds true  for every $B_\rho\subset B_R\Subset\Omega,$ with $C \equiv C(n, r, p, q, \lambda, \Lambda, \alpha, h)$.
\end{theorem}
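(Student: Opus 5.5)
We run a Moser iteration on the Caccioppoli inequality \eqref{Caccioppoli}. Put $V=(\mu^2+|Du|^2)^{\frac{p+\gamma}{4}}$. Then $|DV|^2\le C(1+\gamma)^2(\mu^2+|Du|^2)^{\frac{p-2+\gamma}{2}}|D^2u|^2$, so \eqref{Caccioppoli} gives a bound on $\int|D(\eta V)|^2$. The Sobolev embedding in the form $\|\eta V\|_{2^*}^2\le C\int|D(\eta V)|^2$ (with any exponent $>2$ when $n=2$) then bounds $\big(\int(\eta^2(\mu^2+|Du|^2)^{\frac{p+\gamma}{2}})^{\frac{n}{n-2}}\big)^{\frac{n-2}{n}}$ by three terms:
\[
C(1+\gamma)^4\int\eta^2h^2(\mu^2+|Du|^2)^{\frac{2q-p+\gamma}{2}},\qquad \int|D\eta|^2(\mu^2+|Du|^2)^{\frac{q+\gamma}{2}},
\]
and the analogous $p$-term. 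The critical piece is the $h^2$ term.

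\emph{Case ii), with $\alpha=0$.} Apply Hölder with exponents $r/2$ and $r/(r-2)$:
\[
\int\eta^2h^2W^{2q-p+\gamma}\le\|h\|_{L^r}^2\Big(\int \eta^{\frac{2r}{r-2}}W^{(2q-p+\gamma)\frac{r}{r-2}}\Big)^{\frac{r-2}{r}},\qquad W=(\mu^2+|Du|^2)^{1/2}.
\]
The new exponent has to be compared with the left-hand exponent $(p+\gamma)\frac{n}{n-2}$. Since $r>n$, we have $\frac{r}{r-2}<\frac{n}{n-2}$, so iterating $\gamma_{k+1}$ from $(\gamma_k+2q-p)\frac{r}{r-2}$ to $(p+\gamma_k)\frac{n}{n-2}$ yields a genuine gain. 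Following \cite{EMM16}, we iterate over radii $\rho_k\downarrow\rho$ and obtain
\[
\|W\|_{L^\infty(B_\rho)}\le C(R-\rho)^{-\vartheta}\|W\|_{L^{s}(B_R)}^{\theta'}
\]
for some $s>p$. Then we interpolate $\|W\|_{L^s}\le\|W\|_{L^\infty}^{1-p/s}\|W\|_{L^p}^{p/s}$ and apply Young. The exponent bookkeeping closes, giving a power of $\|W\|_\infty$ below $1$, exactly when $\frac qp<1+\frac1n-\frac1r$. Lemma \ref{lem:Giaq} absorbs $\|Du\|_{L^\infty}$, and we obtain $G(t)=t^{\frac{r-n}{p(r-n)-rn(q-p)}}+t^{1/p}$.

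\emph{Case i), with $p=q$ and $r=n$.} Plain Hölder fails here. We instead split $h=h\chi_{\{h\le M\}}+h\chi_{\{h>M\}}$ and use the Orlicz smallness $\|h\chi_{\{h>M\}}\|_{L^n\log^\alpha L}\to0$. The Orlicz–Hölder/Young inequality \eqref{Fenchel}, with $\Phi(t)=t^{n/2}\log^{\alpha}(e+t)$, controls $\int\eta^2h^2W^{p+\gamma}$ by $\epsilon(M)\,\|\eta V\|_{2^*}^2$ plus a $\log$-weighted lower-order term. That lower-order term is handled via Lemma \ref{GGP} for the conjugate. The factor $(1+\gamma)^4$ must be beaten by the logarithmic gain along the iteration. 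Summing over the Moser steps, the product $\prod_k(1+\gamma_k)^{4/\gamma_k}\cdots$ converges precisely when $\alpha>4n$, which gives the $t^{1/p}$ bound.

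\emph{Case iii), the limit case.} Here the gain in Case ii) degenerates: the exponents coincide, so the interpolation exponent equals $1$. We keep the $\log^\alpha$ weight. Using Lemma \ref{elem1} with $\varphi(t)=t^\beta/\log^\alpha t$, we split $h^2W^{2q-p+\gamma}$ so that the Orlicz–Young inequality sends $h$ to $h^r\log^\alpha(e+h)$ and $W$ to $W^{\text{critical}}/\log^{\alpha}W$. The log loss at each step then yields an extra factor of the form $\log^{-\alpha/2}\|W\|_\infty$, which replaces the missing power gain. Closing the estimate by Young's inequality requires an exponential-type inequality: one needs $\|W\|_\infty^{r/n}$ to be bounded by $\exp(c\,t^{2p(r-n)/(\alpha n)})$-type quantities. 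This produces the stated $G$ in case iii).

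The main obstacle is this last absorption step in case iii), together with the $\gamma$-dependence in case i). In both, the constants must be tracked so that the logarithmic gain (rather than a power gain) suffices to reabsorb $\|Du\|_\infty$ via Lemma \ref{lem:Giaq}. The approach I would try first is to fix $\|W\|_{L^\infty(B_{R})}$ as a parameter $L$, run the iteration to get $\|W\|_{L^\infty(B_\rho)}\le C\,t^{a}L^{1}\log^{-\alpha/2}(L)$, and then invert that relation.
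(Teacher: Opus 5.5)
Your case ii) is correct and takes a different route from the paper's, namely that of \cite{EMM16}. It uses H\"older with exponents $(r/2,\,r/(r-2))$ and the exponent recursion $a_{k+1}=\frac{n(r-2)}{r(n-2)}a_k-2(q-p)$, $a_0=p$. This gives $\|W\|_{L^\infty}\le C\|W\|_{L^{s_0}}^{\theta'}$ with $s_0=\frac{r(2q-p)}{r-2}$, $\theta'=\frac{2q-p}{p-a^*}$ and $a^*=\frac{(q-p)r(n-2)}{r-n}$. After interpolation the power of $\|W\|_{L^\infty}$ is $\frac{(r-2)(s_0-p)}{r(p-a^*)}$. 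This is $<1$ exactly when $\frac{q-p}{p}\frac{rn}{r-n}<1$, and the resulting exponent on $\int W^p$ is indeed $\frac{r-n}{p(r-n)-rn(q-p)}$.

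In case i) your level splitting of $h$ is essentially the paper's idea, but qualitative smallness of the tail does not make the Moser product converge. You need the rate $\int_{\{h>M\}}h^n\,dx\le\log^{-\alpha}(e+M)\int h^n\log^\alpha(e+h)\,dx$. This lets you take $\log M_k\approx p_k^{2n/\alpha}$, so that $\sum_k p_k^{-1}\log M_k<\infty$ for $\alpha>2n$. Lemma \ref{GGP} is not needed in this case.

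The genuine gap is case iii), which is the new content of the theorem, and the mechanism you describe does not produce the estimate you want to invert.

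\begin{itemize}
\item In the $L^{r/2}$-H\"older scheme the borderline is global. Each single step still improves the exponent, since $a_k>a^*$. The degeneracy only appears in the accumulated power $\theta'(1-p/s_0)=1$.
\item An Orlicz--H\"older inequality merely places $W^{(2q-p+\gamma)r/(r-2)}$ in the slightly smaller space $L^{r/(r-2)}\log^{-\beta}L$ inside the integral. The gain it yields depends on how $W$ is distributed, not on $\|W\|_{L^\infty}$.
\item The additive Young inequality $st\le\Phi(s)+\Phi^*(t)$ destroys the homogeneity needed to absorb $\|\eta V\|_{L^{2^*}}^2$.
\item Lemma \ref{elem1} is not a device for splitting $h^2W^{2q-p+\gamma}$.
\item As a check, inverting $s/\log^c s$ produces $\exp(\sigma^{1/c})$. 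The stated $G$ therefore corresponds to $c=\frac{\alpha n}{2p(r-n)}$, not to the $\log^{-\alpha/2}\|W\|_{L^\infty}$ gain you predict.
\end{itemize}

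What is missing is the paper's device, which works at the $W^{1,n}$ scale rather than the $L^r$ scale.

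\begin{itemize}
\item At every Moser step write $W^{2q-p+\gamma}\le\|W\|_{L^\infty}^{2(q-p)}W^{p+\gamma}$, and split $h$ at a level $\varepsilon$.
\item On $\{h>\varepsilon\}$ use $h^2\le L(\varepsilon)^{-2/n}[h^nL(h)]^{2/n}$ with $L(t)=t^{r-n}\log^\alpha(e+t)$, followed by H\"older in $(n/2,\,n/(n-2))$.
\item Choose $\varepsilon=L^{-1}\bigl(2^nH^n(1+\gamma^2)^n\|W\|_{L^\infty}^{n(q-p)}\bigr)$, so that this term is absorbed.
\item On $\{h\le\varepsilon\}$ you are left with the coefficient $\bigl[(1+\gamma^2)\|W\|_{L^\infty}^{q-p}\varepsilon\bigr]^2$ in front of $\int W^{p+\gamma}$.
\item Lemma \ref{elem1} then bounds $[L^{-1}(ab)]^2$ by $C\,L^{-1}(a^2)L^{-1}(b^2)$. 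This separates the $\gamma$-dependence, whose Moser product converges, from the $\|W\|_{L^\infty}$-dependence, which accumulates through $\sum_j p_j^{-1}=\frac{n}{2p}$ and yields \eqref{stimaquasifin}.
\item Only at this point does $L^{-1}(\tau)\le\tau^{1/(r-n)}\log^{-\alpha/(r-n)}(e+L^{-1}(\tau))$ turn the accumulated power into exactly $\|W\|_{L^\infty}^{\frac{q-p}{p}\frac{rn}{r-n}}=\|W\|_{L^\infty}$, divided by $\widehat L(\|W\|_{L^\infty})$ from \eqref{defL}.
\item Lemma \ref{GGP}, Young's inequality with $\widehat L^{-1}$ and Lemma \ref{lem:Giaq} then close the argument, and computing $\widehat L^{-1}$ gives $G$.
\end{itemize}

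Your endgame (freeze $\|W\|_{L^\infty}$, obtain a bound by $\|W\|_{L^\infty}/\log^c$, invert) is the right one. Without this step, however, there is no such bound to invert. The same scheme also yields cases i) and ii), which is what the paper's unified approach buys over the \cite{EMM16} route.
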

\begin{proof}
Let us fix a ball $B_{R_0}\Subset \Omega$ and radii $\rho_{0}<\tilde \rho_0<\rho<R<\tilde R_0<R_{0}$ that will be needed in the  iteration procedures, constituting the essential steps in our proof. We shall suppose without loss of generality that $R_0 \le 1$. 

 Let us choose $\eta\in C^1_0(B_{{R}})$ such that $\eta=1$ on $B_{{\rho}}$ and $|D\eta|\le \frac{C}{{R-\rho}}$, so that \eqref{Caccioppoli} implies

\begin{eqnarray}
&& \int_{B_{{R}}} \eta^2 (\mu^2 + |Du|^2)^{\frac{p-2 + \gamma}{2}}  |D^2 u|^2 \, dx \label{primastima} \\[2mm] 
&\le & C (1+\gamma^2) \, \int_{B_{{R}}} \eta^2h^2(x) (\mu^2 + |Du|^2)^{\frac{2q-p + \gamma}{2}}  \, dx \nonumber\\[2mm]
&&\quad+  \frac{C}{({R-\rho})^2}  \, \int_{B_{{R}}}   (\mu^2 + |Du|^2)^{\frac{p + \gamma}{2}}  +  \frac{C}{({R- \rho})^2}  \, \int_{B_{{R}}}   (\mu^2 + |Du|^2)^{\frac{q + \gamma}{2}} \, dx.\nonumber
\end{eqnarray}
The Sobolev  inequality yields
\begin{eqnarray*}
&&\left ( \int_{B_{{R}}}  \eta^{2^*}(\mu^2 + |Du|^2)^{(\frac{p + \gamma}{4}) 2^*}\, dx\right )^{\frac{2}{2^*}} \le \, C \, \int_{B_{{R}}} \Big|D (\eta(\mu^2 + |Du|^2)^{\frac{p + \gamma}{4}})\Big|^2 \, dx 	\cr\cr
&\le&C(1+\gamma^2)\int_{B_{{R}}} \eta^2 (\mu^2 + |Du|^2)^{\frac{p-2 + \gamma}{2}}  |D^2 u|^2\,dx+C \int_{B_{{R}}} |D \eta|^2 \,  (\mu^2 + |Du|^2)^{\frac{p + \gamma}{2}}  \, dx,
\end{eqnarray*}
where, as usual, we set
$$ 2^*=\begin{cases}
	\displaystyle{\frac{2n}{n-2}}\qquad\qquad\qquad\qquad\qquad \text{if}\,\,n\ge3\cr\cr
	\text{any finite exponent}\, \qquad\quad\qquad \text{if}\,\,n=2.
	\end{cases}$$
	Using estimate \eqref{primastima} to control the first integral in the right hand side of previous inequality, we obtain
\begin{eqnarray}\label{stimapreite}
&&\left ( \int_{B_{{R}}}  \eta^{2^*}(\mu^2 + |Du|^2)^{(\frac{p + \gamma}{2}) \frac{2^*}{2}}\, dx\right )^{\frac{2}{2^*}} 	\cr\cr
&\le& C (1+\gamma^2)^2 \int_{B_{{R}}} \eta^2 h^2(x) (\mu^2 + |Du|^2)^{\frac{2q-p + \gamma}{2}}  \, dx \cr\cr
&&+\frac{C(1+\gamma^2)}{({R - \rho})^{2}} \int_{B_{{R}}}  \,  (\mu^2 + |Du|^2)^{\frac{p + \gamma}{2}}  \, dx\cr\cr
&&+\frac{C(1+\gamma^2)}{({R - \rho})^{2}} \int_{B_{{R}}}  \,  (\mu^2 + |Du|^2)^{\frac{q + \gamma}{2}} \, dx.
\end{eqnarray}
From now on, to simplify the presentation, we shall use the following notations
\begin{equation}\label{defLV}
 V(Du):=(\mu^2+|Du|^2)^{\frac{1}{2}}. \qquad \qquad L(h):= h^{r-n}\log^{\alpha}(e+h).   
\end{equation}
In this way, we can write  \eqref{stimapreite} as follows
\begin{eqnarray}\label{stimapreitebis}
&&\left ( \int_{B_{{R}}}  \eta^{2^*}V(Du)^{(\frac{p+\gamma}{2})\frac{2^*}{2}}\, dx\right )^{\frac{2}{2^*}} 	\cr\cr
&\le& C (1+\gamma^2)^2  \int_{B_{{R}}}  \eta^2h^2 V(Du)^{2q-p + \gamma}  \, dx\cr\cr
&&\quad+\,\frac{C(1+\gamma^2)}{(R - \rho)^{2}} \int_{B_{{R}}}  \,  V(Du)^{p + \gamma}  \, dx+\,\frac{C(1+\gamma^2)}{(R - \rho)^{2}} \int_{B_{{R}}}  \,  V(Du)^{q + \gamma}  \, dx \cr\cr
&\le &  C (1+\gamma^2)^2  \int_{{h >\varepsilon}}\eta^2
\frac{1}{{\big[L(h)\big]^{\frac{2}{n}}}} \big[h^n L(h)\big]^{\frac{2}{n}} V(Du)^{2q-p + \gamma}  \, dx \cr \cr 
&& \quad+ \,\varepsilon^2 C (1+\gamma^2)^2  \int_{{h < \varepsilon}}\eta^2 V(Du)^{2q-p + \gamma}   \, dx\cr\cr
&&\quad+\frac{C(1+\gamma^2)}{(R - \rho)^{2}} \int_{B_{{R}}}  \,  V(Du)^{p + \gamma} \, dx+\frac{C(1+\gamma^2)}{(R - \rho)^{2}} \int_{B_{{R}}}  \,  V(Du)^{q + \gamma}  \, dx \cr \cr 
&\le & C\frac{(1+\gamma^2)^2}{{L^{\frac{2}{n}}(\varepsilon)}} \|V(Du)\|_{L^\infty(B_{R})}^{2(q-p)} \int_{B_{{R}}}\eta^2 {[h^n L(h)]^{\frac{2}{n}}} V(Du)^{p + \gamma} \,dx\cr\cr
&& \quad+ \,C\varepsilon^2(1+\gamma^2)^2 \|V(Du)\|_{L^\infty(B_{{R}})}^{2(q-p)} \int_{B_{{R}}} V(Du)^{p + \gamma} \, dx\cr\cr
&&\quad+\,\frac{C(1+\gamma^2)}{({R - \rho})^{2}} \left(1 + \|V(Du)\|_{L^\infty(B_{{R}})}^{q-p} \right) \int_{B_{{R}}}  \,  V(Du)^{p + \gamma}  \, dx\cr \cr 
&=:& I + II + III,
\end{eqnarray}
%\textcolor{teal}{secondo me al posto del termine che ho messo in verde ci vorrebbe $1 + \|V(Du)\|_{L^\infty(B_{\bar R})}^{q-p}$ Se concordi, cambio ovunque} \textcolor{magenta}    {concordo!}
where, in the last inequality, we used the a priori assumption $Du\in L^\infty_{\mathrm{loc}}(\Omega)$ and that the function $L(t)$ is strictly increasing.
\\
In what follows, we will make explicit the computations in the case $n > 2;$ the case $n = 2$ can be treated with similar methods as in \cite{CGHPdN20}, \cite{EPdN23}.

We estimate the integral $I$ by using assumption \eqref{F5F6} as follows
\begin{eqnarray*}
I &\le& C\frac{(1+\gamma^2)^2}{{L^{\frac{2}{n}}(\varepsilon)}} \|V(Du)\|_{L^\infty\left(B_{{R}}\right)}^{2(q-p)} \left (\int_{B_{{R}}}h^n L(h) \, dx \right )^{2/n} \left (\int_{B_{{R}}}\eta^{2^*}V(Du)^{\frac{(p + \gamma) n}{n-2}} \, dx \right )^{\frac{n-2}{n}}.
\end{eqnarray*}
Setting
\[
H^2 := C \, \left (\int_{B_{R_0}}  h^n L(h) \, dx \right )^{2/n} = C \, \left (\int_{B_{R_0}}  h^r\log^\alpha(e+h) \, dx \right )^{2/n},
\]
 we have that
\begin{eqnarray*}
I &\le& \frac{H^2(1+\gamma^2)^2}{{L^{\frac{2}{n}}(\varepsilon)}} \|V(Du)\|_{L^\infty\left(B_{{R}}\right)} ^{2(q-p)}\left (\int_{B_{R}}\eta^{2^*} V(Du)^{ \frac{(p + \gamma) n}{n-2}} \, dx \right )^{\frac{n-2}{n}}.
\end{eqnarray*}
 At this point we choose $\varepsilon$ so that
\[
\frac{H^2(1+\gamma^2)^2}{{L^{\frac{2}{n}}(\varepsilon)}} \|V(Du)\|_{L^\infty\left(B_{{R}}\right)}^{2(q-p)} = \frac{1}{4}\quad\Longrightarrow  \quad L(\varepsilon)=2^{n}H^n(1+\gamma^2)^n\|V(Du)\|_{L^\infty\left(B_{{R}}\right)}^{n(q-p)}
\]
then, by the strict monotonicity of the function $L$,
 we have 
\begin{eqnarray*}
 \varepsilon = {{L}}^{-1} \Big(2^{n} H^n (1+\gamma^2)^n \|V(Du)\|^{n(q-p)}_{L^\infty\left(B_{{R}}\right)}\Big) 
\end{eqnarray*}
and, with this choice, estimate \eqref{stimapreitebis} becomes
\begin{eqnarray*}
&&\left (\int_{B_{{R}}} \eta^{2^*}V(Du)^{\frac{(p + \gamma) n}{n-2}} \,dx \right )^{\frac{n-2}{n}}\cr\cr
&\le& \frac{1}{4} \left (\int_{B_{{R}}}\eta^{2^*} V(Du)^{\frac{(p + \gamma)n}{n-2}} \, dx \right )^{\frac{n-2}{n}} +\frac{C(1+\gamma^2)}{({R - \rho})^{2}}  \left(1 + \|V(Du)\|_{L^\infty(B_{{R}})}^{q-p} \right)  \int_{B_{R}}  \,  V(Du)^{p + \gamma}  \, dx\cr\cr
&&\,\, +C\left[(1+\gamma^2)\|V(Du)\|_{L^\infty\left(B_{{R}}\right)}^{(q-p)} {{L}}^{-1} \Big(2^{n} H^n (1+\gamma^2)^n \|V(Du)\|^{n(q-p)}_{L^\infty\left(B_{{R}}\right)}\Big) 
\right]^2 
\int_{B_{{R}}} V(Du)^{p + \gamma} \, dx\cr\cr 
&\le& \frac{1}{4} \left (\int_{B_{{R}}}\eta^{2^*} V(Du)^{\frac{(p + \gamma)n}{n-2}} \, dx \right )^{\frac{n-2}{n}} +\frac{C(1+\gamma^2)}{({R - \rho})^{2}}  \left(1 + \|V(Du)\|_{L^\infty(B_{{R}})}^{q-p} \right)  \int_{B_{{R}}}  \,  V(Du)^{p + \gamma}  \, dx\cr\cr
&+&C\left[(1+\gamma^4) \|V(Du)\|_{L^\infty(B_{{R}})}^{2(q-p)} {{L}}^{-1} \Big((1+4^{n} H^{2n}) {(1+\gamma^2)^{2n}}\Big) {{L}}^{-1} \Big(1+\|V(Du)\|^{2n(q-p)}_{L^\infty(B_{{R}})}\Big) 
\right] \cr\cr
&& \qquad\qquad\qquad\cdot\int_{B_{{R}}} V(Du)^{p + \gamma} \, dx 
\end{eqnarray*}
where, in the last line, we used Lemma \ref{elem1}, {since by the definition of $L(t)$ we have that $L^{-1}(\tau)$ is equivalent at infinity to the function $\tau^{\frac{1}{r-n}}\log^{-\frac{\alpha}{r-n}}(\tau)$}.

Reabsorbing the first integral in the right hand side by the left hand side {of the previous estimate}, setting
$$\Theta= {1 + 4^nH^{2n}}$$
and recalling that $\eta=1$ on $B_\rho$, we finally have
\begin{eqnarray}\label{baseiter}
&&\left (\int_{B_{\rho}} V(Du)^{\frac{(p + \gamma) n}{n-2}} \,dx \right )^{\frac{n-2}{n}} \cr\cr
&\le& C\left[(1+\gamma^4) \|V(Du)\|_{L^\infty(B_{\tilde R_0})}^{2(q-p)} {{L}}^{-1} \Big(\Theta (1+\gamma^2)^{2n}\Big){{L}}^{-1} \Big(1+\|V(Du)\|^{2n(q-p)}_{L^\infty(B_{ \tilde R_0})}\Big) 
\right] \int_{B_R} V(Du)^{p + \gamma} \, dx\cr\cr
&&\qquad+\frac{C(1+\gamma^2)}{(R - \rho)^{2}}  \left(1 + \|V(Du)\|_{L^\infty(B_{\tilde R_0})}^{q-p} \right) \int_{B_{R}}  \,  V(Du)^{p + \gamma}  \, dx
\cr\cr
&\le& {C\left[(1+\gamma^4) \|V(Du)\|_{L^\infty(B_{\tilde R_0})}^{2(q-p)} {{L}}^{-1} \Big(\Theta (1+\gamma^2)^{2n}\Big){{L}}^{-1} \Big(1+\|V(Du)\|^{2n(q-p)}_{L^\infty(B_{ \tilde R_0})}\Big) 
\right] \int_{B_R} V(Du)^{p + \gamma} \, dx}\cr\cr
&&\qquad {+\frac{C(1+\gamma^2)}{(R - \rho)^{2}}  \left(1 + \|V(Du)\|_{L^\infty(B_{\tilde R_0})}^{2(q-p)} \right) \int_{B_{R}}  \,  V(Du)^{p + \gamma}  \, dx}
\cr\cr
&\le& {C\left[(1+\gamma^4) \|V(Du)\|_{L^\infty(B_{\tilde R_0})}^{2(q-p)} {{L}}^{-1} \Big({2} \Theta (1+\gamma^4)^{n}\Big){{L}}^{-1} \Big(1+\|V(Du)\|^{2n(q-p)}_{L^\infty(B_{ \tilde R_0})}\Big) 
\right] \int_{B_R} V(Du)^{p + \gamma} \, dx}\cr\cr
&&\qquad {+\frac{C(1+\gamma^2)}{(R - \rho)^{2}}  \left(1 + \|V(Du)\|_{L^\infty(B_{\tilde R_0})}^{2(q-p)} \right) \frac{L^{-1} \Big({2} (\Theta (1 + \gamma^4)^n \Big)}{L^{-1}({2} \Theta)}
\int_{B_{R}}  \,  V(Du)^{p + \gamma}  \, dx}
\cr\cr
&\le& 
{{\frac{C(1+\gamma^4)}{(R - \rho)^{2}}\left(1 + \|V(Du)\|_{L^\infty(B_{\tilde R_0})}^{2(q-p)} \right){{L}}^{-1} \Big({2}\Theta (1+\gamma^4)^n\Big) \left [\frac{1}{L^{-1}(2 \Theta)}+{{L}}^{-1} \Big(1+\|V(Du)\|^{2n(q-p)}_{L^\infty(B_{ \tilde R_0})}\Big)\right ]}}\cr\cr
&&\qquad\qquad\qquad\cdot {\int_{B_R} V(Du)^{p + \gamma} \, dx} \cr\cr
&\le& 
{\frac{{C(\Theta)}(1+\gamma^4)}{(R - \rho)^{2}}\left(1 + \|V(Du)\|_{L^\infty(B_{\tilde R_0})}^{2(q-p)} \right){{L}}^{-1} \Big({2}\Theta (1+\gamma^4)^n\Big) \left(1+{{L}}^{-1} \Big(\|V(Du)\|^{2n(q-p)}_{L^\infty(B_{ \tilde R_0})}\Big)\right)}\cr\cr
&&\qquad\qquad\qquad\qquad\qquad\cdot\int_{B_R} V(Du)^{p + \gamma} \, dx,
\end{eqnarray}
{where we used that $R-\rho<1$ and that $1+x\le 2(1+x^2)$ and that $L^{-1}({2}\Theta (1+\gamma^4)^n)\ge L^{-1}({2}\Theta)$.}%\colorbox{yellow}{l' ultima l' ho scritta per noi ma è banale}
\begin{comment}
     where, in the last line, we used the fact that, by  \eqref{maggiore} and by virtue of the strict monotonicity of $L$, it holds the following $${{L}}^{-1} \Big(\Theta (1+\gamma^2)^{2n}\Big) {{L}}^{-1} \Big(\|V(Du)\|^{2n(q-p)}_{L^\infty(B_{ R})}\Big)\|V(Du)\|_{L^\infty(B_{ R})}^{q-p}\ge C(K, \Theta).$$
 Indeed,
 $$L^{-1} \Big(\Theta (1+\gamma^2)^{2n}\Big) {{L}}^{-1} \Big(\|V(Du)\|^{2n(q-p)}_{L^\infty(B_{ R})}\Big)\|V(Du)\|_{L^\infty(B_{ R})}^{q-p}$$
 $$\ge L^{-1} (\Theta ) {{L}}^{-1} (K^{2n(q-p)})K^{q-p}=:C(\Theta,K)$$
 and this is equivalent to
 $$\|V(Du)\|_{L^\infty(B_{ R})}^{q-p}\le C {{L}}^{-1} \Big(\Theta (1+\gamma^2)^{2n}\Big) {{L}}^{-1} \Big(\|V(Du)\|^{2n(q-p)}_{L^\infty(B_{ R})}\Big)\|V(Du)\|_{L^\infty(B_{ R})}^{2(q-p)}$$
 
% To shorten the notation, we set
% \begin{equation}
%     \label{legamePsigPhi}
% \textcolor{red}{\Psi(s^2) := s^2 \Phi^{-1} \big(s^2\big)}
% \end{equation}
% so that we can rewrite \eqref{baseiter} as follows
% \begin{eqnarray}\label{baseiter2}
% &&\left (\int_{B_{\rho}} V(Du)^{\frac{(p + \gamma) n}{n-2}} \,dx \right )^{\frac{n-2}{n}} \cr\cr
% &\le& \frac{1}{H} \Psi \Big(\sqrt{CH}(1+\gamma^2) g(\|Du\|_{\infty}^2)\Big) \int_{B_R} V(Du)^{p + \gamma} \, dx\cr\cr
% &&\qquad+\frac{C}{(R - \rho)^{2}} \int_{B_{R}}  \,  V(|Du|)^{p + \gamma}  \, dx 
% \end{eqnarray}
\end{comment}

With radii $\tilde \rho_0\le  \rho< R\le \tilde R_0$ fixed at the beginning of the section, we define the decreasing sequence of radii by setting
$$\rho_i= \tilde\rho_0+\frac{ \tilde R_0- \tilde\rho_0}{2^i}.$$
Let us also define the following increasing sequence of exponents
$$p_0=p \qquad\qquad {p_{i}}={p_{i-1}}\frac{2^*}{2}=p_0\left(\frac{2^*}{2}\right)^{i}.$$
By virtue the a priori assumption $u\in W^{1,\infty}_{\mathrm{loc}}(\Omega)$, estimate \eqref{baseiter} holds true for $\gamma=0$ {which represents the first step of the iteration} and for every $ \rho_0<\rho<R< R_0$. Hence, we may iterate it  on the concentric balls $B_{\rho_i}$  {with $\gamma_i = p_i - p$}, thus obtaining
\begin{eqnarray}\label{stimapreitepassoi}
&&\left ( \int_{B_{\rho_{i+1}}}  V(Du)^{p_{i+1}}\, dx\right )^{\frac{1}{p_{i+1}}} 	\cr\cr
&\le& \displaystyle{\prod_{j=0}^{i}}\Bigg( \frac{Cp_j^{4}\,{L}^{-1} \Big({2} \Theta p_j^{4n}\Big)}{(\rho_j - \rho_{j+1})^{2}} {\Big(1+\|V(Du)\|^{2(q-p)}_{L^\infty\left(B_{ \tilde R_0}\right)}\Big)}\left(1+{{L}^{-1} \Big(\|V(Du)\|^{2n(q-p)}_{L^\infty\left(B_{ \tilde R_0}\right)}\Big)}\right)\Bigg)^{\frac{1}{p_j}}\cr\cr 
&&\qquad\qquad\cdot\left(\int_{B_{\tilde R_0}}V(Du)^{p}  \,
dx\right)^{\frac{1}{p}}\cr\cr
&=& \displaystyle{\prod_{j=0}^{i}}\Bigg({\frac{C4^{j+1} p_j^{4}\,{{L}^{-1} \Big({2} \Theta p_j^{4n}\Big)}}{( \tilde R_0- \tilde \rho_0 )^{2}}} {\Big(1+\|V(Du)\|^{2(q-p)}_{L^\infty\left(B_{ \tilde R_0}\right)}\Big)}\left(1+{{L}^{-1} \Big(\|V(Du)\|^{2n(q-p)}_{L^\infty\left(B_{ \tilde R_0}\right)}\Big)}\right)\Bigg)^{\frac{1}{p_j}} \cr\cr 
&&\qquad\qquad\cdot\left(\int_{B_{\tilde R_0}}V(Du)^{p}  \,
dx\right)^{\frac{1}{p}}\cr\cr
&\le& \displaystyle{\prod_{j=0}^{i}}\left({\frac{C4^{j+1} p_j^{4}}{( \tilde R_0- \tilde \rho_0 )^{2}}} \right)^{\frac{1}{p_j}}\displaystyle{\prod_{j=0}^{i}}\left({{L}^{-1} \Big({2} \Theta p_j^{4n}\Big)} \right)^{\frac{1}{p_j}} \displaystyle{\prod_{j=0}^{i}}\left(1+\|V(Du)\|^{2(q-p)}_{L^\infty\big(B_{ \tilde R_0}\big)}\right)^{\frac{1}{p_j}}\cr\cr
&&\qquad\qquad\cdot\displaystyle{\prod_{j=0}^{i}}\left(1+{{L}^{-1} \Big(\|V(Du)\|^{2n(q-p)}_{L^\infty\left(B_{ \tilde R_0}\right)}\Big)}\right)^{\frac{1}{p_j}}\left(\int_{B_{\tilde R_0}}V(Du)^{p}  \, dx\right)^{\frac{1}{p}}.
\end{eqnarray}
%%\textcolor{magenta}{PER NOI: Ho usato che $$\prod_{j=0}^{i}(a_j+b_j)^{\frac{1}{p_j}}=\exp\left(\log\prod_{j=0}^{i}(a_j+b_j)^{\frac{1}{p_j}}\right)=\exp\left(\sum_{j=0}^i \frac{1}{p_j}\log(a_j+b_j)\right)$$
%%$$\le \exp\left(\sum_{j=0}^i \frac{1}{p_j}\Big(\log(2a_j)+\log(2b_j)\Big)\right)=\exp\left(\sum_{j=0}^i \frac{1}{p_j}\Big(\log(2a_j)\Big)+\sum_{j=0}^i \frac{1}{p_j}\Big(\log(2b_j)\Big)\right) $$
%%$$=\prod_{j=0}^{i}(2a_j)^{\frac{1}{p_j}}\prod_{j=0}^{i}(2b_j)^{\frac{1}{p_j}}$$}
One can easily check that
\begin{eqnarray*}
\displaystyle{\prod_{j=0}^{i}}\left({\frac{C4^{j+1} p_j^{4}}{( \tilde R_0- \tilde \rho_0 )^{2}}} \right)^{\frac{1}{p_j}}&=&\exp\left(\sum_{j=0}^i \frac{1}{p_j}\log\left(\frac{C}{( \tilde R_0- \tilde \rho_0 )^{2}}4^{j+1} p_j^{4}\right)\right)\\
&\le& \exp\left(\log{\frac{C }{( \tilde R_0-\tilde \rho_0
)^{2}}}\sum_{j=0}^{+\infty} \frac{1}{p_j}+\log4\sum_{j=0}^{+\infty}\frac{j+1}{p_j}+4\sum_{j=0}^{+\infty} \frac{\log p_j}{p_j}\right)\\
&\le & \frac{{C}}{( \tilde R_0- \tilde \rho_0
)^{\frac{n}{p}}},
\end{eqnarray*}
%\textcolor{teal}{perché dipendente da $\Theta$? Non mi risulta. Controllare nella stima che stiamo iterando a monte}
where we used that $\sum_{j=0}^{{\infty}}\frac{1}{p_j}= \frac{n}{2 p} $. 
\begin{comment}
The same arguments yield
\begin{eqnarray*}
&&\displaystyle{\prod_{j=0}^{i}}\Bigg(\frac{C2^{j+1}p_j^2}{(\tilde R_0- \tilde \rho_0)^{2}}\Big(1+\|V(Du)\|^{{q-p}}_{L^\infty\left(B_{ \tilde R_0}\right)}\Big) \Bigg)^{\frac{1}{p_j}} \\
&= & \exp\left(\sum_{j=0}^i \frac{1}{p_j}\log\left(\frac{C\Big(1+\|V(Du)\|^{{q-p}}_{L^\infty\left(B_{ \tilde R_0}\right)}\Big)}{( \tilde R_0- \tilde \rho_0 )^{2}}2^{j+1} p_j^{2}\right)\right)\\
&\le & \exp\left(\frac{C\Big(1+\|V(Du)\|^{{q-p}}_{L^\infty\left(B_{ \tilde R_0}\right)}\Big)}{( \tilde R_0- \tilde \rho_0 )^{2}}\sum_{j=0}^{+\infty} \frac{1}{p_j}+\log2\sum_{j=0}^{+\infty}\frac{j+1}{p_j}+2\sum_{j=0}^{+\infty} \frac{\log p_j}{p_j}\right)\\
&\le & \frac{{C(\Theta)\Big(1+\|V(Du)\|^{{q-p}}_{L^\infty\left(B_{ \tilde R_0}\right)}\Big)^{\frac{n}{2p}}}}{( \tilde R_0- \tilde \rho_0
)^{\frac{n}{p}}}.
\end{eqnarray*}
\end{comment}
Now, we observe that
$$\displaystyle{\prod_{j=0}^{i}}\left({{L}^{-1} \Big({2}\Theta p_j^{4n}\Big)} \right)^{\frac{1}{p_j}}=\exp\left(\sum_{j=0}^i \frac{1}{p_j}\log{{L}^{-1} \Big({2} \Theta p_j^{4n}\Big)}\right). $$
To bound the sum in the right hand side of previous equality, we distinguish between the case $r=n$ and $r>n$ and we shall use the following {direct consequences of the definition of the function $L(t)$ at \eqref{defLV}} 
\begin{equation}\label{F6}
    \begin{cases}
L^{-1}(\tau)\le \exp\left[\left(\tau\right)^{{\frac{1}{\alpha }}}\right]\qquad\qquad\qquad \quad \text{if}\,\,\alpha>0\,\, \text{and}\,\, r=n\cr\cr
L^{-1}(\tau)\le \frac{\tau^{\frac{1}{r-n}}}{\log^{^{{\frac{\alpha }{r-n}}}}\big(e+L^{-1}(\tau)\big)} \qquad\qquad\,\,\, \text{if}\,\,\alpha\ge 0\,\, \text{and}\,\, r>n,
\end{cases}\end{equation}
 for all $\tau \ge \log^\alpha(e+1)$.
If $r=n,$ which, of course entails $p = q$, we use the first estimate in \eqref{F6}  to deduce that
$$\exp\left(\sum_{j=0}^i \frac{1}{p_j}\log{{L}^{-1} \Big({2} \Theta p_j^{4n}\Big)}\right)\le \exp\left(\left({2} \Theta \right)^{{\frac{1}{\alpha }}}\sum_{j=0}^i \frac{p_j^{{\frac{4n}{\alpha }}}}{p_j}\right) \le C(\Theta,n,p,\alpha), $$
since, by virtue of assumption i), we have ${\alpha>4n}$. If $r>n$, we use the second estimate in \eqref{F6} {(where we notice that the denominator in the right hand side is greater than $1$)}, to deduce that
{
\begin{eqnarray*}
    && \exp\left(\sum_{j=0}^i \frac{1}{p_j}\log{{L}^{-1} \Big(\Theta p_j^{4n}\Big)}\right)\\
    &\le& \exp\left(\sum_{j=0}^i \frac{1}{p_j}\left [\log{ \Big(\Theta \, p_j^{4n}\Big)^{\frac{1}{r - n}}} \right ] \right) \\
    &\le & \exp\left( \frac{1}{r - n}\sum_{j=0}^{+ \infty}  \frac{\log \Theta}{p_j} + \frac{4n}{r-n}  \sum_{j=0}^{+ \infty}    
    \frac{\log p_j}{p_j} \right )\le C(\Theta,n,p,r)
    %\\
%&\le& \exp\left(C(\Theta,n,r)\sum_{j=0}^i \frac{\log p_j}{p_j}\right) \le C(\Theta,n,p,r). 
\end{eqnarray*}
    }

Moreover
\begin{eqnarray*}
&&\displaystyle{\prod_{j=0}^{i}}\left({\Big(1+\|V(Du)\|^{2(q-p)}_{L^\infty\left(B_{ \tilde R_0}\right)}\Big)}\left(1+{{L}^{-1} \Big(\|V(Du)\|^{2n(q-p)}_{L^\infty\left(B_{ \tilde R_0}\right)}\Big)}\right)\right)^{\frac{1}{p_j}}\cr\cr
 &=&\exp\left(\sum_{j=0}^i \frac{1}{p_j}\log\left({\Big(1+\|V(Du)\|^{2(q-p)}_{L^\infty\left(B_{ \tilde R_0}\right)}}\right)\left(1+{{L}^{-1} \Big(\|V(Du)\|^{2n(q-p)}_{L^\infty\left(B_{ \tilde R_0}\right)}\Big)}\right)\right)
	\cr\cr
	&{\le}& \exp\left(\log\left({\Big(1+\|V(Du)\|^{2(q-p)}_{L^\infty\left(B_{ \tilde R_0}\right)}\Big)}\left(1+{{L}^{-1} \|V(Du)\|^{2n(q-p)}_{L^\infty\left(B_{ \tilde R_0}\right)}\Big)}\right)\right)\sum_{j=0}^{+\infty} \frac{1}{p_j}\right)\cr\cr
 &{\le}& {C(p,q)\big(1+\|V(Du)\|^{\frac{n(q-p)}{p}}_{L^\infty\left(B_{ \tilde R_0}\right)}\big)}\left(1+{L}^{-1} \Big(\|V(Du)\|^{2n(q-p)}_{L^\infty\left(B_{ \tilde R_0}\right)}\Big)\right)^{\frac{n}{2p}},
\end{eqnarray*}
where we used again that $\sum_{j=0}^{{\infty}}\frac{1}{p_j}= \frac{n}{2 p} $.
Therefore, we can let $i\to \infty$ in \eqref{stimapreitepassoi} thus getting
\begin{eqnarray}\label{stimaquasifin}
	||V(Du)||_{L^{\infty}(B_{\tilde\rho_0})} &\le&  C\frac{\left (1+ \|V(Du)\|^{\frac{n(q-p)}{p}}_{L^\infty\left(B_{ \tilde R_0}\right)} \right )}{( \tilde R_0- \tilde \rho_0 )^{\frac{n}{p}}}\left(1+{L}^{-1} \Big(\|V(Du)\|^{2n(q-p)}_{L^\infty\left(B_{ \tilde R_0}\right)}\Big)\right)^{\frac{n}{2p}} \cr\cr  
    &\cdot& \left(\int_{B_{ \tilde R_0}}V(Du)^{p}
\, dx\right)^{\frac{1}{p}}\cr\cr
&\le& \frac{C}{( \tilde R_0- \tilde \rho_0 )^{\frac{n}{p}}}\left(1+\|V(Du)\|^{\frac{n(q-p)}{p}}_{L^\infty\left(B_{ \tilde R_0}\right)}\left({L}^{-1} \Big(\|V(Du)\|^{2n(q-p)}_{L^\infty\left(B_{ \tilde R_0}\right)}\Big)\right)^{\frac{n}{2p}}\right)\cr\cr  
    &\cdot& \left(\int_{B_{ \tilde R_0}}V(Du)^{p}
\, dx\right)^{\frac{1}{p}},
\end{eqnarray}
with $C=C(\Theta,n,p,{q},r,\alpha)$ and where we used that \begin{equation}\label{eq-elem}
 (1+x)\big(1+L^{-1}(x^{\frac{2}{p}})\big)^{\frac{n}{2p}}\le C\left(1+x\big(L^{-1}(x^{\frac{2}{p}})\big)^{\frac{n}{2p}}\right).   
\end{equation}
Indeed, if $x^{\frac{2}{p}}\le \log^{{\alpha}}(e+1)$ then the strict monotonicity of the function $L(t)$ implies  $L^{-1}(x^{\frac{2}{p}})\le L^{-1}(\log^{{\alpha}}(e+1))= {L^{-1} (L(1)) = 1}$ and therefore the right hand side of \eqref{eq-elem} can be bounded with a constant. On the other hand, if $x^{\frac{2}{p}}> \log^{{\alpha}}(e+1)$, we have $$1+x\le \log^{\frac{{{\alpha}}p}{2}}(e+1)+x\le 2x$$ and $L^{-1}(x^{\frac{2}{p}})>1$ that yields $$\big(1+L^{-1}(x^{\frac{2}{p}})\big)^{\frac{n}{2p}}\le \big(2L^{-1}(x^{\frac{2}{p}})\big)^{\frac{n}{2p}}.$$ Multiplying the last two inequalities we get
$$ (1+x)\big(1+L^{-1}(x^{\frac{2}{p}})\big)^{\frac{n}{2p}}\le Cx\big(L^{-1}(x^{\frac{2}{p}})\big)^{\frac{n}{2p}}$$
and summing the results we end up with \eqref{eq-elem}.
Note that,  in case $q=p$,  estimate \eqref{stimaquasifin} reads as
\begin{eqnarray}\label{stimafinp=q}
	||V(Du)||_{L^{\infty}(B_{\tilde\rho_0})} &\le&  \frac{C}{({\tilde{R_0}- \tilde{\rho_0}})^{\frac{n}{p}}}  \left(\int_{B_{ \tilde R_0}}V(Du)^{p}
\, dx\right)^{\frac{1}{p}}.
\end{eqnarray}
and this ends the proof. Remark also that only assumption {i)} is needed to achieve this case.
\\
In case $p<q$, by the assumption at ii) we have  that $r>n$ and so we may use the second estimate in \eqref{F6} in the right hand side of   \eqref{stimaquasifin} thus getting
\begin{eqnarray}\label{stimaquasifinbis}
	||V(Du)||_{L^\infty\left(B_{ \tilde\rho_0}\right)} %&\le&  \frac{C\Big(1+ \|V(Du)\|^{\frac{n(q-p)}{p}}_{L^\infty\left(B_{ \tilde R_0}\right)}\Big)}{( \tilde R_0- \tilde \rho_0 )^{\frac{n}{p}}} \frac{\|V(Du)\|^{\frac{n(q-p)}{p}\frac{n}{r-n}}_{L^\infty\left(B_{ \tilde R_0}\right)}}{\log^{{\frac{\alpha n}{2p(r-n)}}}\left(e+L^{-1} \Big(\|V(Du)\|^{2n(q-p)}_{L^\infty\left(B_{ \tilde R_0}\right)}\Big)\right)} \cr\cr  
    %&& \qquad\cdot\left(\int_{B_{ \tilde R_0}}V(Du)^{p}
%\, dx\right)^{\frac{1}{p}}\cr\cr 
&\le & \frac{C}{(\tilde R_0- \tilde\rho_0 )^{\frac{n}{p}}} \left(1+\frac{\|V(Du)\|^{\frac{q-p}{p}\frac{rn}{r-n}}_{L^\infty\left(B_{ \tilde R_0}\right)}}{\log^{{\frac{\alpha n}{2p(r-n)}}}\left(e+L^{-1} \Big(\|V(Du)\|^{2n(q-p)}_{L^\infty\left(B_{ \tilde R_0}\right)}\Big)\right)}\right) \cr\cr  
    && \qquad \cdot\left(\int_{B_{ \tilde R_0}}V(Du)^{p}
\, dx\right)^{\frac{1}{p}}
\end{eqnarray}
Now, we distinguish between the case 
$\frac{q-p}{p}\frac{rn}{r-n}<1$ and the case $\frac{q-p}{p}\frac{rn}{r-n}=1$.
\\
{\bf Case} $\mathbf{\frac{q-p}{p}\frac{rn}{r-n}<1{\,\Longleftrightarrow\,\frac{q}{p}<1+\frac{1}{n}-\frac{1}{r}}
}$ 
\\
In this case, as long as $\log(e+t)\ge 1$, estimate \eqref{stimaquasifinbis} implies
\begin{eqnarray*}\label{stimaquafinbis}
	||V(Du)||_{L^{\infty}(B_{\tilde\rho_0})} &\le&  \frac{C\Big(1+\|V(Du)\|^{{\frac{q-p}{p}\frac{rn}{r-n}}}_{L^\infty\left(B_{ \tilde R_0}\right)}\Big)}{( \tilde R_0- \tilde \rho_0 )^{\frac{n}{p}}}  \left(\int_{B_{ \tilde R_0}}V(Du)^{p}
\, dx\right)^{\frac{1}{p}}
%\cr\cr
%&\le& \frac{C \Big(1+\|V(Du)\|^{{\frac{q-p}{p}\frac{rn}{r-n}}}_{L^\infty\left(B_{ \tilde R_0}\right)}\Big)}{( \tilde R_0- \tilde \rho_0 )^{\frac{n}{p}}}  \left(\int_{B_{ \tilde R_0}}V(Du)^{p}
%\, dx\right)^{\frac{1}{p}}
%\cr\cr 
%&&+\frac{C }{( \tilde R_0- \tilde \rho_0 )^{\frac{n}{p}}}  \left(\int_{B_{ \tilde R_0}}V(Du)^{p}
%\, dx\right)^{\frac{1}{p}}.
\end{eqnarray*}
By Young's inequality {with exponents $\left(\frac{p(r-n)}{rn(q-p)}, \frac{p(r-n)}{p(r-n)-rn(q-p)}\right)$}, which is legitimate in this case, we deduce that
\begin{eqnarray}\label{stimaquasifinbisbis}
	||V(Du)||_{L^{\infty}(B_{\tilde\rho_0})} &\le& \frac12 \|V(Du)\|_{L^{\infty}(B_{\tilde R_0})}+{\frac{C}{(\tilde R_0-\tilde \rho_0 )^{\frac{n(r-n)}{p(r-n)-rn(q-p)}}}\left(\int_{B_{\tilde R_0}}V(Du)^{p}
\, dx\right)^{\frac{r-n}{p(r-n)-rn(q-p)}}} \cr\cr
    &&+\frac{C }{( \tilde R_0- \tilde \rho_0 )^{\frac{n}{p}}} \left(\int_{B_{\tilde R_0}}V(Du)^{p}
\, dx\right)^{\frac{1}{p}}
\end{eqnarray}
and the iteration Lemma \ref{lem:Giaq} concludes the proof {in this case}.
\\
{\bf Case} $\mathbf{\frac{q-p}{p}\frac{rn}{r-n}=1{\,\Longleftrightarrow\,\frac{q}{p}=1+\frac{1}{n}-\frac{1}{r}}}$ 
\\
In this case, estimate \eqref{stimaquasifinbis} reads as
\begin{eqnarray}\label{stimaquasifinter}
	||V(Du)||_{L^{\infty}(B_{\tilde\rho_0})} &\le& \frac{C}{(\tilde R_0-\tilde \rho_0 )^{\frac{n}{p}}} \left(1+\frac{\|V(Du)\|_{L^{\infty}(B_{\tilde R_0})}}{\log^{{\frac{\alpha n}{2p(r-n)}}}\left(e+L^{-1} \Big(\|V(Du)\|^{2n(q-p)}_{L^{\infty}(B_{\tilde R_0})}\Big)\right)} \right)\cr\cr  
    &\cdot& \left(\int_{B_{\tilde R_0}}V(Du)^{p}
\, dx\right)^{\frac{1}{p}}\cr\cr
&\le& \frac{C}{(\tilde R_0-\tilde \rho_0 )^{\frac{n}{p}}} \frac{\|V(Du)\|_{L^{\infty}(B_{\tilde R_0})}}{\log^{{\frac{\alpha n}{2p(r-n)}}}\left(e+L^{-1} \Big(\|V(Du)\|^{2n(q-p)}_{L^{\infty}(B_{\tilde R_0})}\Big)\right) }\left(\int_{B_{\tilde R_0}}V(Du)^{p}
\, dx\right)^{\frac{1}{p}}  
    \cr\cr
&&\quad+ \frac{C}{(\tilde R_0-\tilde \rho_0 )^{\frac{n}{p}}}  \left(\int_{B_{\tilde R_0}}V(Du)^{p}
\, dx\right)^{\frac{1}{p}}.
\end{eqnarray}
Define the function $\widehat{L}:[0,+\infty)\to [0,+\infty)$ by setting
\begin{equation}\label{defL}
    \widehat{L}(s)=:\log^{{\frac{\alpha n}{2p(r-n)}}}\left(e+L^{-1} \Big(s^{2n(q-p)}\Big)\right)
\end{equation}
and note that $\widehat{L}$ is a continuous strictly increasing function  by virtue of the definition of $L$ and since $r>n$. Hence %\textcolor{teal}{mettere il riferimento a $s_0$ come nel teorema}
\begin{eqnarray*}
	||V(Du)||_{L^{\infty}(B_{\tilde\rho_0})} &\le &\frac{C}{(\tilde R_0-\tilde \rho_0 )^{\frac{n}{p}}} \frac{\|V(Du)\|_{{L^{\infty}(B_{\tilde R_0})}}}{\widehat{L}\Big(\|V(Du)\|_{L^{\infty}(B_{\tilde R_0})}\Big)}  \left(\int_{B_{\tilde R_0}}V(Du)^{p}
\, dx\right)^{\frac{1}{p}}\cr\cr
&&\quad+ \frac{C}{(\tilde R_0-\tilde \rho_0 )^{\frac{n}{p}}}  \left(\int_{B_{\tilde R_0}}V(Du)^{p}
\, dx\right)^{\frac{1}{p}}
\cr\cr
&\le&  \frac{C}{(\tilde R_0-\tilde \rho_0 )^{\frac{n}{p}}} [(\widehat{L}^{-1})^*]^{-1}\Big({\|V(Du)\|_{L^{\infty}(B_{\tilde R_0})}}\Big) \left(\int_{B_{\tilde R_0}}V(Du)^{p}
\, dx\right)^{\frac{1}{p}}\cr\cr
&&\quad+ \frac{C}{(\tilde R_0-\tilde \rho_0 )^{\frac{n}{p}}}  \left(\int_{B_{\tilde R_0}}V(Du)^{p}
\, dx\right)^{\frac{1}{p}},
\end{eqnarray*}
where, as long as $\widehat{L}$ is increasing and diverging at $\infty$, we used Lemma \ref{GGP} {with $\widehat{L}$ in place of ${\varphi}$ and $[(\widehat{L}^{-1})^*]^{-1}(t)$ in place of ${\psi}$ }. The function $[(\widehat{L}^{-1})^*]^{-1}(t)$
is clearly concave as it is the inverse of a polar function which is always convex. 
 Using Young's inequality at \eqref{Fenchel} with $\widehat{L}^{-1}$ in place of $\Phi$, that is
 $$st = {\frac{1}{2} s \, (2 \, t)}\le (\widehat{L}^{-1})^*\left({\frac{1}{2} s}\right)+ \widehat{L}^{-1}({2 t}),$$ yields
\begin{eqnarray*}
	||V(Du)||_{L^{\infty}(B_{\tilde\rho_0})} &\le&  \frac12||V(Du)||_{L^{\infty}(B_{\tilde R_0})}\cr\cr
 &&+ \widehat{L}^{-1}\Bigg(\frac{C}{(\tilde R_0-\tilde \rho_0 )^{\frac{n}{p}}}\left(\int_{B_{ R_0}}V(Du)^{p}
\, dx\right)^{\frac{1}{p}}\Bigg) \cr\cr
 &&+ \frac{C}{(\tilde R_0- \tilde \rho_0 )^{\frac{n}{p}}}  \left(\int_{B_{ R_0}}V(Du)^{p}
\, dx\right)^{\frac{1}{p}}.
\end{eqnarray*}
Hence, we may use  Lemma  \ref{lem:Giaq} to get
\begin{eqnarray}\label{apriori}
	||V(Du)||_{L^{\infty}(B_{\rho_0})} &\le&   C\widehat{L}^{-1} \Bigg(\frac{C}{(R_0- \rho_0 )^{\frac{n}{p}}}\left(\int_{B_{ R_0}}V(Du)^{p}
\, dx\right)^{\frac{1}{p}}\Bigg)\cr\cr
&&\quad+ \frac{C}{(R_0 - \rho_0 )^{\frac{n}{p}}}  \left(\int_{B_{R_0}}V(Du)^{p}
\, dx\right)^{\frac{1}{p}}.
\end{eqnarray}
By the definition of $\widehat{L}$ at \eqref{defL}, we can easily get the expression of $\widehat{L}^{-1}(\sigma)$ arguing as follows
$$\widehat{L}^{-1}(\sigma)=s \Longleftrightarrow\widehat{L}(s)=\sigma \Longleftrightarrow \sigma=\log^{{\frac{\alpha n}{2p(r-n)}}}\left(e+L^{-1} \Big(s^{2n(q-p)}\Big)\right)$$
i.e.
$$\exp\left(\sigma ^{{\frac{2p(r-n)}{\alpha n}}}\right)=e+L^{-1} \Big(s^{2n(q-p)}\Big)\Longleftrightarrow s^{2n(q-p)}=L\left(\exp\left(\sigma ^{{\frac{2p(r-n)}{\alpha n}}}\right)-e\right)$$
that yields the desired expression of $\widehat{L}^{-1}(\sigma)$
\begin{equation*}
\widehat{L}^{-1}(\sigma)=\left[L\left(\exp\left(\sigma ^{{\frac{2p(r-n)}{\alpha n}}}\right)-e\right)\right]^{\frac{1}{2n(q-p)}}.    
\end{equation*}
{Further, recalling that $L(t)=t^{r-n}\log^\alpha(e+t)$, we have
\begin{eqnarray*}
\widehat{L}^{-1}(\sigma)&=& \Big[\Big(\exp\big(\sigma^{\frac{2p(r-n)}{\alpha n}}\big)-e\Big)^{r-n}\log^\alpha\Big(\exp(\sigma^{\frac{2p(r-n)}{\alpha n}})-e+e\Big)\Big]^{\frac{1}{2n(q-p)}} \cr\cr 
&=& \Big[\Big(\exp\big(\sigma^{\frac{2p(r-n)}{\alpha n}}\big)-e\Big)^{r-n}\Big(\sigma^{\frac{2p(r-n)}{\alpha n}}\Big)^\alpha\Big]^{\frac{1}{2n(q-p)}}\cr\cr
&=&\Big(\exp\big(\sigma^{\frac{2p(r-n)}{\alpha n}}\big)-e\Big)^{\frac{r-n}{2n(q-p)}}\sigma^{\frac{p(r-n)}{n^2(q-p)}}\cr\cr
&=&\left[\Big(\exp\big(\sigma^{\frac{2p(r-n)}{\alpha n}}\big)-e\Big)\sigma^{\frac{2p}{n}}\right]^{\frac{r-n}{2n(q-p)}}.
\end{eqnarray*}
Since $\frac{q-p}{p}\frac{rn}{r-n}=1$  we have $\frac{r-n}{n}=r\frac{q-p}{p}$ and so
\begin{equation}\label{definversaL}
    \widehat{L}^{-1}(\sigma)\le\left(\exp\Big(\sigma^{\frac{2p(r-n)}{\alpha n}}\Big)\sigma^{\frac{2p}{n}}\right)^{\frac{r}{2p}}=\exp\Big(\frac{r}{2p}\cdot\sigma^{\frac{2p(r-n)}{\alpha n}}\Big)\sigma^{\frac{r}{n}}.\end{equation}}
Inserting \eqref{definversaL} in \eqref{apriori} we get
{\begin{eqnarray*}\label{apriorifin}
	||V(Du)||_{L^{\infty}(B_{\rho_0})} &\le&   C\left(\exp\Bigg(\frac{C}{( R_0- \rho_0 )^{\frac{2(r-n)}{\alpha }}}\left(\int_{B_{ R_0}}V(Du)^{p}
\, dx\right)^{\frac{2(r-n)}{\alpha n}}\Bigg)\right)\cr\cr
&&\quad\cdot \frac{C}{(R_0-\rho_0 )^{\frac{r}{p}}}  \left(\int_{B_{R_0}}V(Du)^{p}
\, dx\right)^{\frac{r}{np}}\cr\cr
&&\qquad+ \frac{C}{(R_0-\rho_0 )^{\frac{n}{p}}}  \left(\int_{B_{R_0}}V(Du)^{p}
\, dx\right)^{\frac{1}{p}},
\end{eqnarray*}}
\begin{comment}
\begin{eqnarray*}\label{apriorifin}
	||V(Du)||_{L^{\infty}(B_{\rho_0})} &\le&   \left[L\left(\exp\Bigg(\frac{C}{( R_0- \rho_0 )^{\frac{2(r-n)}{\alpha n}}}\left(\int_{B_{ R_0}}V(Du)^{p}
\, dx\right)^{\frac{2(r-n)}{\alpha n^2}}\Bigg)\right)\right]^{\frac{1}{2n(q-p)}}\cr\cr
&&\quad+ \frac{C}{(R_0-\rho_0 )^{\frac{n}{p}}}  \left(\int_{B_{R_0}}V(Du)^{p}
\, dx\right)^{\frac{1}{p}},
\end{eqnarray*}
\end{comment}
i.e. the conclusion.
 \end{proof}
 \section{The approximation}

\label{quattro}

In this section we give only a sketch of  the approximation procedure that comes along quite standard arguments (in view of the assumption $F(x, \xi) = \tilde{F}(x, |\xi|)$).
Let us start recalling an approximation Lemma whose proof can be achieved arguing exactly as in   \cite[Lemma 4.1]{CupGuiMas} (see also \cite[Lemma 4.2]{CGGP}).
\begin{lemma}\label{approx} Let $F:\Omega\times \mathbb{R}^n\to [0,+\infty)$ be a Carath\'eodory function satisfying assumptions {\eqref{(F2)(F3)}--\eqref{(F4)}} {and \eqref{F5F6}} in Section 1.  Then there exists a sequence of Carath\'eodory functions $F_k:\,\Omega\times \mathbb{R}^n\to [0,+\infty)$ monotonically convergent to $F(x,\xi)$ such that
\medskip

i) $F_k(x, \xi)=\tilde F_k(x, |\xi|)$, for a.e. $x\in \Omega$ and every $\xi\in \mathbb{R}^n$

ii) $F_k(x, \xi)\le  F_{k+1}(x, \xi)\le F(x,\xi)$, for a.e. $x\in \Omega$ , every $\xi\in \mathbb{R}^n$ and  $k\in\mathbb{N}$

iii) there exist $\ell$ independent of $k$ and $\ell_1$ depending also on $k$ such that
\begin{equation}
\label{(A1)}
\left \{
\begin{array}{lll}
&  (\mu^2 + |\xi|^2)^{p/2}\le  F_k(x, \xi) \le \ell (\mu^2 + |\xi|^2)^{q/2} 
\\[1mm]
& F_k(x, \xi) \le \ell_1(k) (\mu^2 + |\xi|^2)^{p/2}  
\end{array}
\right.
\end{equation}
for a.e. $x\in \Omega$, every $\xi\in \mathbb{R}^n$

iv) there exists a constant $\tilde\lambda$ depending only on $\lambda, p$ such that
\begin{equation}
\label{(A2)}
\tilde\lambda \, (\mu^2 + |\xi|^2)^{\frac{p-2}{2}} |\eta|^2 \le \sum_{i,j} (F_k)_{\xi_i \xi_j}(x, \xi) \eta_i \eta_j, 
\end{equation}
 for a.e. $x\in \Omega$, every $\xi\in \mathbb{R}^n$
 
v)  there exist $\tilde\Lambda$ independent of $k$ and $\tilde\Lambda_1$ depending also on $k$ such that
\begin{equation}
\label{(A3)}
|(F_k)_{\xi_i \xi_j}(x, \xi)| \le \, \tilde\Lambda \, (\mu^2 + |\xi|^2)^{\frac{q-2}{2}} \quad |(F_k)_{\xi_i \xi_j}(x, \xi)| \le \, \tilde\Lambda_1(k) \, (\mu^2 + |\xi|^2)^{\frac{p-2}{2}}  
\end{equation}
for a.e. $x\in \Omega$ , every $\xi\in \mathbb{R}^n$

vi)  there exist $C$ independent of $k$ and $C_1$ depending also on $k$ such that
\begin{equation}
\label{(A4)}
|(F_k)_{\xi x}(x, \xi)| \le C h(x) (\mu^2 + |\xi|^2)^{\frac{q-1}{2}} \quad |(F_k)_{\xi x}(x, \xi)| \le C_1(k) h(x) (\mu^2 + |\xi|^2)^{\frac{p-1}{2}}  
\end{equation}
for a.e. $x\in \Omega$ , every $\xi\in \mathbb{R}^n$.

\end{lemma}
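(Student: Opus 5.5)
The construction I would follow is the one of \cite[Lemma 4.1]{CupGuiMas}, which works directly on the radial profile. Write $F(x,\xi)=\tilde F(x,|\xi|)$ and set $g(x,t):=\tilde F_t(x,t)$. Then \eqref{(F2)(F3)} yields two-sided bounds on $g_t$ (the radial second derivative) and on $g(x,t)/t$ (the tangential second derivative). Both are of the form
\[
\lambda(\mu^2+t^2)^{\frac{p-2}{2}}\le\cdot\le\Lambda(\mu^2+t^2)^{\frac{q-2}{2}} .
\]
Assumption \eqref{(F4)} gives $|g_x(x,t)|\le h(x)(\mu^2+t^2)^{\frac{q-1}{2}}$.

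\textbf{Construction.} For $k\in\mathbb{N}$ (with $k>\mu$) I keep $\tilde F$ unchanged on $[0,k]$ and replace it beyond $k$ by a function of controlled $p$-growth:
\[
\tilde F_k(x,t)=\tilde F(x,t)\ \ (0\le t\le k),\qquad
\tilde F_k(x,t)=\tilde F(x,k)+g(x,k)(t-k)+\tfrac12 g_t(x,k)\,\Psi_k(t)\ \ (t>k),
\]
with $\Psi_k(t)=\int_k^t\int_k^s \bigl(\tfrac{\mu^2+\tau^2}{\mu^2+k^2}\bigr)^{\frac{p-2}{2}}d\tau\,ds$. This is a second-order Taylor extension in which $g_t(x,\cdot)$ is frozen at $t=k$ and then continued with the $p$-profile. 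Hence $\tilde F_k$ is $C^1$ in $t$ with second derivative continuous across $t=k$, and $F_k(x,\xi):=\tilde F_k(x,|\xi|)$ is of class $C^2$. This gives i) immediately.

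\textbf{Verification of the bounds.} These are all one-dimensional computations, done separately on $[0,k]$ and $(k,\infty)$.
\begin{itemize}
\item On $(k,\infty)$ one has $\partial_{tt}\tilde F_k=g_t(x,k)\bigl(\tfrac{\mu^2+t^2}{\mu^2+k^2}\bigr)^{\frac{p-2}{2}}$. This quantity is bounded below by $\lambda(\mu^2+t^2)^{\frac{p-2}{2}}$. It is bounded above both by $\Lambda(\mu^2+t^2)^{\frac{q-2}{2}}$ (since $t\ge k$ and $q\ge p$) and by $\Lambda(\mu^2+k^2)^{\frac{q-p}{2}}(\mu^2+t^2)^{\frac{p-2}{2}}$. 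The second bound supplies the constant $\tilde\Lambda_1(k)$.
\item The tangential quantity $\partial_t\tilde F_k/t$ is handled by the same comparison, because $\partial_t\tilde F_k(x,t)=g(x,k)+\int_k^t\partial_{tt}\tilde F_k$.
\item Integrating these bounds twice gives \eqref{(A1)} (after renormalizing the constants, as in \eqref{F1}) and \eqref{(A2)}, \eqref{(A3)}.
\item For \eqref{(A4)}, differentiate in $x$ the explicit formula on $(k,\infty)$. This produces $g_x(x,k)$ and $g_{tx}(x,k)$ terms. The first is controlled by $h(x)(\mu^2+k^2)^{\frac{q-1}{2}}$ via \eqref{(F4)}, and hence by $Ch(x)(\mu^2+t^2)^{\frac{q-1}{2}}$ as well as by $C_1(k)h(x)(\mu^2+t^2)^{\frac{p-1}{2}}$.
\end{itemize}

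\textbf{Main obstacle.} The hard step is the $x$-derivative of $g_t(x,k)$. Nothing in \eqref{(F4)} controls $F_{\xi\xi x}$, so the frozen-curvature construction above cannot be used verbatim. I would avoid the issue by building the extension from first-order data only. Replace $\tfrac12 g_t(x,k)\Psi_k(t)$ by $\tfrac{g(x,k)}{k}\Psi_k(t)$: since $g(x,k)/k$ also lies between $\lambda(\mu^2+k^2)^{\frac{p-2}{2}}$ and $\Lambda(\mu^2+k^2)^{\frac{q-2}{2}}$, all the ellipticity and growth bounds survive. The price is that the second $t$-derivative may jump at $t=k$. I would then restore $C^2$ regularity by a mollification in $t$ on $[k,k+1]$, or by a convex combination with a cutoff in $t$; this keeps every bound with modified constants and involves only $g$ and $g_x$. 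Its $x$-derivative is $g_x(x,k)/k$, which is bounded by $h(x)(\mu^2+k^2)^{\frac{q-2}{2}}$.

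\textbf{Monotonicity and limits.} I would obtain ii) from $\partial_{tt}\tilde F_k\le\partial_{tt}\tilde F$ for $t>k$, and from the fact that the functions coincide with matching first derivative at $t=k$. Taking the construction increasing in $k$ then gives $F_k\uparrow F$. Finally, since \eqref{F5F6} concerns only $h$, it passes to the $F_k$ unchanged.
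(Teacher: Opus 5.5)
Your diagnosis that \eqref{(F4)} gives no control on $F_{\xi\xi x}$ is correct. However, the construction you settle on fails ii). The inequality you invoke for it, $\partial_{tt}\tilde F_k\le\partial_{tt}\tilde F$ on $(k,\infty)$, does not follow from the hypotheses.

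On $(k,\infty)$ you have $\partial_{tt}\tilde F_k=c_k(x)\bigl(\frac{\mu^2+t^2}{\mu^2+k^2}\bigr)^{\frac{p-2}{2}}$ with $c_k(x)=\tilde F_t(x,k)/k$. This coefficient can be as large as $\Lambda(\mu^2+k^2)^{\frac{q-2}{2}}$, while beyond $k$ the assumptions only give $\tilde F_{tt}(x,t)\ge\lambda(\mu^2+t^2)^{\frac{p-2}{2}}$.

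Concretely, take $\mu=1$ and an $x$-independent radial $F$ whose radial curvature is $\Lambda_0(1+t^2)^{\frac{q-2}{2}}$ on $[0,k]$ and $\lambda(1+t^2)^{\frac{p-2}{2}}$ on $[k+1,\infty)$. It satisfies \eqref{(F2)(F3)} with suitable constants. Suppose $c_k(1+k^2)^{\frac{2-p}{2}}>\lambda$, which holds for instance when $k$ is large and $q>p$. Then $(\tilde F_k-\tilde F)''$ is a positive multiple of $(1+t^2)^{\frac{p-2}{2}}$ on $[k+1,\infty)$. Hence $\tilde F_k-\tilde F$ grows like $t^p$, and $F_k>F$ for large $|\xi|$.

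The frozen curvature $\tilde F_{tt}(x,k)$ of your first construction has the same defect. Moreover, $F_k\le F_{k+1}$ would additionally require $\tilde F_t(x,k)/\bigl(k(\mu^2+k^2)^{\frac{p-2}{2}}\bigr)$ to be nondecreasing in $k$, and nothing guarantees this.

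The $C^2$ repair is also left unspecified, and it is not innocuous. A cutoff combination $\theta\tilde F+(1-\theta)\tilde G$ of two profiles has second derivative containing $2\theta'(\tilde F-\tilde G)'+\theta''(\tilde F-\tilde G)$. These terms have no sign and can break both ii) and the ellipticity bounds. A minor point: since $\Psi_k''(k)=1$, the factor $\frac12$ in your first formula already spoils the matching of second derivatives at $t=k$.

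The remedy is to continue with an $x$-independent curvature that never exceeds the guaranteed lower bound, and to blend at the level of second derivatives. Fix smooth nonincreasing cutoffs $\theta_k(t)=\theta(t-k)$, where $\theta=1$ on $(-\infty,0]$ and $\theta=0$ on $[1,\infty)$, so that $\theta_k\le\theta_{k+1}$. Define $\tilde F_k$ by $\tilde F_k(x,0)=0$, $\partial_t\tilde F_k(x,0)=0$ and
\[
\partial_{tt}\tilde F_k(x,t)=\theta_k(t)\,\tilde F_{tt}(x,t)+\bigl(1-\theta_k(t)\bigr)\lambda(\mu^2+t^2)^{\frac{p-2}{2}} .
\]
Then $\lambda(\mu^2+t^2)^{\frac{p-2}{2}}\le\partial_{tt}\tilde F_k\le\partial_{tt}\tilde F_{k+1}\le\tilde F_{tt}$. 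Since $\tilde F(x,0)=\tilde F_t(x,0)=0$, integrating twice gives ii). The identity $F_k=F$ on $\{|\xi|\le k\}$ gives the convergence, and $C^2$ regularity is automatic. Properties iii)--v), including the tangential quantity $\partial_t\tilde F_k/t$, follow from the same one-dimensional comparisons you describe.

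For vi), integrate by parts before differentiating in $x$:
\[
\partial_t\tilde F_k(x,t)=\theta_k(t)\tilde F_t(x,t)-\int_0^t\theta_k'(s)\tilde F_t(x,s)\,ds+\lambda\int_0^t\bigl(1-\theta_k(s)\bigr)(\mu^2+s^2)^{\frac{p-2}{2}}\,ds .
\]
Thus $\partial_x\partial_t\tilde F_k$ involves only $\tilde F_{tx}$, which \eqref{(F4)} bounds by $h(x)(\mu^2+t^2)^{\frac{q-1}{2}}$. Since $\theta_k'$ is supported in $[k,k+1]$, both bounds in \eqref{(A4)} follow, with no third derivatives of $F$ needed.

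For comparison, the paper supplies no argument for this lemma. It only points to Lemma 4.1 of Cupini--Guidorzi--Mascolo.
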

Next, for a smooth mollifier $\varphi\in C^\infty_0(B_1(0))$, let us consider 
$$F_{\varepsilon,k}(x,\xi)=\int_{B_{1}(0)} F_k(x+\varepsilon y, \xi)\varphi(y)\,dy.$$
It is easy to check that $F_{\varepsilon,k}$ still satisfy \eqref{(A1)}--\eqref{(A4)} of previous Lemma and moreover 

\begin{equation}
\label{(A5)}
\vert D_xD_\xi F_{\varepsilon,k}(x,\xi)\vert \le C_2( K)h_\varepsilon(x) (\mu^2+|\xi |^2)^{\frac{p-1}{2}},
\end{equation}
where $h_\varepsilon$ denotes the usual mollification of $h$.
\\
Now {fix a ball $B_{R_0}\Subset \Omega$ and} for $u\in W^{1,p}(\Omega)$ a {local} minimizer of the functional {at \eqref{functional}}, let $v_{\varepsilon,k}\in u+W^{1,p}_0(B_{R_0})$ be the unique solution  to the problem
$$\min\left\{\widehat{F}_{\varepsilon,k} (v,B_{R_0}):\,\,\, v\in u+W^{1,p}_0(B_{R_0})\right\},$$
where $$\widehat{F}_{\varepsilon,k} (v,B_{R_0})=\int_{B_{R_0}}F_{\varepsilon,k}(x, Dv)\,dx.$$
By virtue of \eqref{(A1)}--\eqref{(A5)}, we  may use classical regularity results (see for example \cite{Giusti}) to deduce that  $v_{\varepsilon,k}\in W^{1,\infty}_{\rm loc}(B_{R_0})$ and that $(\mu^2+|Dv_{\varepsilon,k}|^2)^{\frac{p-2}{4}}Dv_{\varepsilon,k}\in W^{1,2}_{\rm loc}(B_{R_0})$ for every $\varepsilon>0$ and $k\in \mathbb{N} $.
The left inequality in \eqref{(A1)} implies that
\begin{eqnarray}
 \int_{B_{R_0}}(\mu^2+|Dv_{\varepsilon,k}|^2)^{\frac{p}{2}}\, dx&\le& \int_{B_{R_0}}F_{\varepsilon,k} (x,Dv_{\varepsilon,k})\,dx \label{normaLp} \\[2mm]
&\le& \int_{B_{R_0}}F_{\varepsilon,k} (x,Du)\,dx,  \nonumber
\end{eqnarray}
where we used the minimality of $v_{\varepsilon,k}$ {with $u$ as an admissible test function}.
Since $F_{k} (x,Du)\in L^1(B_{R_0})$, for every $k\in \mathbb{N}$, taking the limit as $\varepsilon\to 0$, by the properties of the convolutions, we get
\[
\lim_{\varepsilon\to 0}\int_{B_{R_0}}(\mu^2+|Dv_{\varepsilon,k}|^2)^{\frac{p}{2}}\, dx \le    \int_{B_{R_0}}F_{k} (x,Du)\,dx \le \int_{B_{R_0}}F (x,Du)\,dx,
\] 
where the last inequality is due to ii) of Lemma \ref{approx}.
Therefore $\big(v_{\varepsilon,k}\big)_\varepsilon$ is a bounded sequence in $u+W^{1,p}_0(B_{R_0})$ and so there exists $v_k\in u+W^{1,p}_0(B_{R_0})$ such that \begin{equation}\label{weakLp}
  v_{\varepsilon,k} \rightharpoonup v_k \qquad \text{in}\,\, u+W^{1,p}_0(B_{R_0}).  
\end{equation}
{as $\varepsilon\to 0$.} The weak lower semicontinuity of the $L^p$ norm implies
\[
\int_{B_{R_0}}(\mu^2+|Dv_{k}|^2)^{\frac{p}{2}}\, dx\le \liminf_{\varepsilon\to 0}\int_{B_{R_0}}(\mu^2+|Dv_{\varepsilon,k}|^2)^{\frac{p}{2}}\, dx \le \int_{B_{R_0}}F (x,Du)\,dx 
\]
and hence 
there exists $v\in u+ W^{1,p}(B_{R_0})$ such that \begin{equation}\label{weakLpbis}
  v_{k} \rightharpoonup v \qquad \text{in}\,\, u+W^{1,p}(B_{R_0}).  
\end{equation}
{as $k\to +\infty$.} On the other hand, since $v_{\varepsilon,k}\in W^{1,\infty}_{\rm loc}(B_{R_0})$, $(\mu^2+|Dv_{\varepsilon,k}|^2)^{\frac{p-2}{4}}Dv_{\varepsilon,k}\in W^{1,2}_{\rm loc}(B_{R_0})$ {and the energy density $F_{\varepsilon, k}$ satisfies \eqref{(A1)}--\eqref{(A5)},} we can use the a priori estimate at \eqref{mainaptotale} to deduce that
\begin{eqnarray}\label{apriori2}
||\mu^2+Dv_{\varepsilon,k}||_{L^\infty \left (B_{\frac{R_0}{2}}\right )}&\le &  \textcolor{black}{G \left [{\frac{C}{(R_0/2)^n}}\left (\int_{B_{R_0}}  \, (1 + |Dv_{\varepsilon,k}|^2)^{\frac{p}{2}} dx \right) ^{\frac{1}{p}} \right ]}
\cr\cr 
&\le&  \textcolor{black}{G\left({\frac{C}{(R_0/2)^n}}\left(\int_{B_{R_0}}F (x,Du)\,dx\right)^{\frac{1}{p}}\right)},
\end{eqnarray}
 where, in the last inequality, we used \eqref{normaLp} and the monotonicity of the function $G$ defined in \eqref{DefinizioneG}.
 \\
Therefore, passing to the limit first as $\varepsilon\to 0$ and then as $k\to +\infty$, we infer that
\begin{equation}\label{weakLpbi}
  v_{\varepsilon,k} \stackrel{*}\rightharpoonup v \qquad \text{weakly * in}\,\, W^{1,\infty}_{\mathrm{loc}}(B_{R_0})  
\end{equation}
and, by the lower semicontinuity of the norm, also
\begin{eqnarray}\label{apriori2bis}
||(\mu^2+Dv^2)^{\frac12}||_{L^\infty \left(B_{\frac{R_0}{2}}\right )}
&\le& \textcolor{black}{G\left(C\left(\int_{B_{R_0}}F (x,Du)\,dx\right)^{\frac{1}{p}}\right)}. 
\end{eqnarray}
The next step consists in 
 showing that $v=u$ a.e. in $B_{R_0}$, and this follows by  the strict convexity of the map $\xi\to F(x,\xi)$ through standard arguments that can be found, for instance, in \cite{EMM16,CGGP,CGHPdN20}.
 At this point, the conclusion of the proofs of Theorems \ref{mainresult}, \ref{mainresult2} and \ref{mainresult3} follows in view of the definition of the function $G(t)$ at \eqref{DefinizioneG}.

\vspace{10mm}

\noindent {\bf Aknowledgments.} The authors are members of the Gruppo Nazionale per l’Analisi Matematica,
la Probabilità e le loro Applicazioni (GNAMPA) of the Istituto Nazionale di Alta Matematica (INdAM). 
A. Passarelli di Napoli has been partially supported through the INdAM-GNAMPA Project 2025 (CUP: E5324001950001) ``Regolarità di soluzioni di equazioni paraboliche a crescita nonstandard degeneri''.
Moreover A. Passarelli di Napoli has been partially supported  by the Centro Nazionale per la Mobilità Sostenibile (CN00000023) - Spoke 10 Logistica Merci (CUP: E63C22000930007). M. Eleuteri and  A. Passarelli di Napoli have been partially supported through the INdAM-GNAMPA Project 2026 (CUP E53C25002010001) ``Esistenza e regolarità per soluzioni di equazioni ellittiche e paraboliche anisotrope''.

\end{document}